
\documentclass[ps,preprint,dvips,twoside]{imsart}

\usepackage{amsmath,amssymb,amsfonts,amsthm,amstext,verbatim}
\usepackage{url,enumerate,color,graphicx,stmaryrd,psfrag}
\usepackage{chngcntr}
\usepackage{mdwlist}   
\usepackage{mathtools} 
\usepackage{wasysym}   
\RequirePackage[colorlinks,citecolor=blue,urlcolor=blue]{hyperref}
\usepackage{breakurl}

\doi{10.1214/11-PS185}
\pubyear{2011}
\volume{8}
\issue{0}
\firstpage{294}
\lastpage{332}

\startlocaldefs

\newcommand\sT{{\mathcal T}}

\newcommand\ol{\overline}
\newcommand\oo{\infty}
\newcommand\sm{\setminus}
\newcommand\De{\Delta}

\newcommand\rad{\text{\rm rad}}
\renewcommand\b{\beta}
\newcommand\si{\sigma}

\newcommand\g{\gamma}
\renewcommand\a{\alpha}
\newcommand\pd{\partial}

\newcommand\es{\varnothing}
\newcommand\Om{\Omega}
\newcommand\om{\omega}
\newcommand\Ga{\Gamma}
\newcommand\La{\Lambda}
\newcommand\Si{\Sigma}
\newcommand\sA{{\mathcal A}}

\newcommand\sS{{\mathcal S}}
\newcommand\sM{{\mathcal M}}
\newcommand\sL{{\mathcal L}}
\newcommand\pc{p_{\text{\rm c}}}
\newcommand\tc{{\text{\rm c}}}
\newcommand\de{\delta}
\newcommand\resp{respectively}
\newcommand\lra{\leftrightarrow}

\renewcommand\th{\theta}
\newcommand\bxp{box-crossing property}
\newcommand\bp{\mathbf{p}}
\newcommand\squ{{\square}}
\newcommand\tri{{\triangle}}
\newcommand\hex{{\hexagon}}

\newcommand\stt{star--triangle transformation}
\newcommand\Cv{C_{\text{\rm v}}}
\newcommand\Ch{C_{\text{\rm h}}}

\newcommand\lest{\le_{\text{\rm st}}}
\newcommand\bigmid{\,\big|\,}

\newcommand\sP{{\mathcal P}}

\newcommand\eps{\epsilon}
\newcommand\ups{\zeta}

\newcommand\compl[1]{#1^{\tc}}
\newcommand{\RR}{\mathbb{R}}     
\newcommand{\NN}{\mathbb{N}}     
\newcommand{\ZZ}{\mathbb{Z}}     
\newcommand{\PP}{\mathbb{P}}     
\newcommand\CC{\mathbb{C}}
\newcommand{\EE}{\mathbb{E}}     
\newcommand\HH{\mathbb{H}}
\newcommand\TT{\mathbb{T}}
\newcommand\LL{\mathbb{L}}
\newcommand{\comp}{\circ} 
\newcommand{\Su}{S^{\Yup}}
\newcommand{\Sd}{S^\Ydown}
\newcommand{\Tu}{T^\vartriangle}
\newcommand{\Td}{T^\triangledown}

\newcommand{\Ann}{\mathcal{A}}

\newcommand{\Lattice}{\mathbb{L}}

\def\mik{1}
\newcommand\cpsfrag[2]{\ifnum\mik=1\psfrag{#1}{#2}\fi}

\newcounter{mycount}

\newenvironment{numlist}{\begin{list}{\arabic{mycount}.}%
   {\usecounter{mycount}\labelwidth=1cm\itemsep 0pt}}{\end{list}}
\newenvironment{letlist}{\begin{list}{\rm(\alph{mycount})}%
   {\usecounter{mycount}\labelwidth=1cm\itemsep 0pt}}{\end{list}}

\newtheorem{thm}{Theorem}
\newtheorem{lemma}[thm]{Lemma}
\newtheorem{prop}[thm]{Proposition}

\newtheorem{conj}[thm]{Conjecture}

\numberwithin{equation}{section}
\numberwithin{thm}{section}
\numberwithin{figure}{section}

\newcommand\saw{\sigma}
\newcommand\fpq{\phi_{p,q}}
\newcommand\qq{\qquad}
\newcommand\rc{random-cluster}
\newcommand\fpqb{\fpq^b}
\newcommand\fpqo{\fpq^0}
\newcommand\fpqon{\fpq^1}

\newcommand\sD{{\mathcal D}}

\newcommand\pcb{\pc^b}
\newcommand\pco{\pc^0}
\newcommand\pcon{\pc^1}
\newcommand\la{\lambda}
\renewcommand\AA{\mathbb{A}}

\newcommand\td{{\text{\rm d}}}
\renewcommand\o{\text{\rm o}}
\newcommand\dc{d_{\text{\rm c}}}
\newcommand\psd{p_{\text{\rm sd}}}

\newcommand\tp{\text{\rm per}}
\newcommand\tf{0}
\newcommand\tw{1}

\counterwithin{table}{section}

\endlocaldefs

\begin{document}

\begin{frontmatter}

\title{Three theorems in\\ discrete random geometry}

\runtitle{Three theorems in discrete random geometry}

\begin{aug}
\author{\fnms{Geoffrey} \snm{Grimmett}\ead[label=e1]{g.r.grimmett@statslab.cam.ac.uk}}
\address{Centre for Mathematical Sciences\\
University of Cambridge\\
Wilberforce Road\\
Cambridge CB3 0WB, UK\\
\printead{e1}}

\runauthor{G. Grimmett}
\end{aug}

\begin{abstract}
These notes are focused on three recent results in
discrete random geometry, namely: the proof by Duminil-Copin and Smirnov
that the connective constant of the hexagonal lattice is $\sqrt{2+\sqrt 2}$;
the proof by the author and Manolescu
of the universality of inhomogeneous bond percolation on
the square, triangular, and hexagonal lattices;
the proof by Beffara and Duminil-Copin that the critical point of the
random-cluster model on $\ZZ^2$ is $\sqrt q/(1+\sqrt q)$.
Background information
on the relevant random processes is presented on route
to these theorems.
The emphasis is upon the communication of ideas
and connections as well as upon the detailed proofs.
\end{abstract}

\begin{keyword}[class=AMS]
\kwd[Primary ]{60K35}
\kwd[; secondary ]{82B43}
\end{keyword}

\begin{keyword}
\kwd{Self-avoiding walk}
\kwd{connective constant}
\kwd{percolation}
\kwd{random-cluster model}
\kwd{Ising model}
\kwd{star--triangle transformation}
\kwd{Yang--Baxter equation}
\kwd{critical exponent}
\kwd{universality}
\kwd{isoradiality}
\end{keyword}

\received{\smonth{10} \syear{2011}, corrected by the author 27 January 2012}

\tableofcontents

\end{frontmatter}

\section{Introduction}\label{sec:intro}

These notes are devoted to three recent rigorous results of
significance in the area of discrete random geometry in two dimensions.
These results are concerned with self-avoiding walks, percolation, and the \rc\ model, and may be summarized as:
\begin{letlist}
\item
the connective constant for self-avoiding walks on the hexagonal lattice is $\sqrt{2+\sqrt 2}$,
\cite{Dum-S}.
\item  the universality of inhomogeneous bond percolation on
the square, triangular and hexagonal lattices, \cite{GM2},
\item the critical point of the \rc\ model on the square lattice with cluster-weighting factor $q \ge 1$
is $\sqrt q/(1+\sqrt q)$, \cite{Beffara_Duminil}.
\end{letlist}
In each case, the background and context will be described and the theorem stated.
A complete proof is included in the case of self-avoiding walks,
whereas reasonably detailed outlines are presented in the other two cases.

If the current focus is on three specific
theorems, the general theme is two-dimensional stochastic
systems. In an exciting area of current research initiated by Schramm \cite{Sch00, Sch06},
connections are being forged between
discrete models and conformality; we mention percolation \cite{Smirnov},
the Ising model \cite{Chelkak-Smirnov2}, uniform
spanning trees and loop-erased random walk \cite{SLW04},
the discrete Gaussian free field \cite{SS09}, and self-avoiding walks \cite{DGKLP}.
In each case, a scaling limit leads (or will lead) to a conformal structure characterized by
a Schramm--L\"owner evolution (SLE). In the settings of (a), (b), (c) above,
the relevant scaling limits are yet to be proved, and in that sense this article is
about three \lq pre-conformal' families of stochastic processes.

There are numerous surveys and books covering the history and
basic methodology of these processes, and we do not repeat this material here.
Instead, we present clear definitions of the processes in question, and we outline
those parts of the general theory to be used in the proofs of the above three theorems.
Self-avoiding walks (SAWs) are the subject of
Section~\ref{sec:saw}, bond percolation of Section~\ref{sec:bp}, and
the \rc\ model of Section~\ref{sec:rcm}.   More expository material
about these three topics may be found, for example,  in \cite{Grimmett_Graphs},
as well as:
SAWs \cite{MS}; percolation \cite{BolRio,Grimmett_Percolation,WW_park_city}; the \rc\ model
\cite{Grimmett_RCM,Werner_SMF}. The relationship between SAWs, percolation, and SLE is
sketched in the companion paper \cite{Law11}. Full references to original material
are not invariably included.

A balance is attempted in these notes between providing enough but not too much
basic methodology. One recurring topic that might
delay readers is the theory of stochastic inequalities. Since a sample
space of the form $\Om=\{0,1\}^E$ is  a partially
ordered set, one may speak of \emph{increasing} random variables. This in turn
gives rise to a partial order on probability measures\footnote{The expectation of a random variable $X$ under a probability
measure $\mu$ is written $\mu(X)$.} on $\Om$ by: $\mu \lest \mu'$ if
$\mu(X) \le \mu'(X)$ for all increasing random variables $X$.
Holley's theorem \cite{Hol}
provides a useful sufficient
criterion for such an inequality in the context of this article.
The reader is referred to \cite[Chap. 2]{Grimmett_RCM} and \cite[Chap. 4]{Grimmett_Graphs}
for accounts of Holley's
theorem, as well as of `positive association' and the FKG inequality.

\begin{figure}[t]
\centering
    \includegraphics[width=0.9\textwidth]{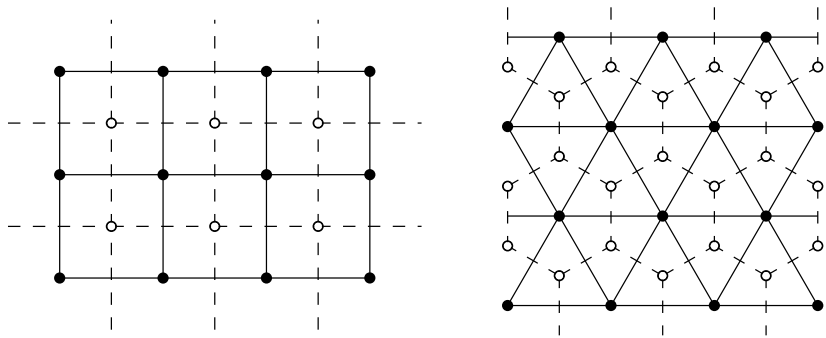}
  \caption{The square lattice $\LL^2$ and its dual square lattice. The triangular lattice $\TT$
and its dual hexagonal
(or `honeycomb') lattice $\HH$.}
  \label{fig:lattices}
\end{figure}

A variety of lattices will be encountered in this article, but predominantly
the square, triangular, and hexagonal lattices illustrated in Figure~\ref{fig:lattices}.
More generally, a \emph{lattice} in $d$ dimensions is a connected
graph $\sL$ with bounded vertex-degrees, together with an embedding
in $\RR^d$ such that: the embedded graph is locally-finite and invariant under shifts
of $\RR^d$ by any of $d$ independent vectors $\tau_1, \tau_2, \dots,\tau_d$.
We shall sometimes speak of a lattice without having regard to its embedding. A lattice is
\emph{vertex-transitive} if, for any pair $v$, $w$ of vertices, there exists a graph-automorphism
of $\sL$ mapping $v$ to $w$. For simplicity, we shall consider only vertex-transitive lattices.
We pick some vertex of a lattice $\sL$ and designate it the
\emph{origin}, denoted $0$, and we generally
assume that $0$ is embedded at the origin of $\RR^d$.
The \emph{degree} of a vertex-transitive
lattice is the number of edges incident to any given vertex.
We write $\LL^d$ for the $d$-dimensional (hyper)cubic lattice, and $\TT$, $\HH$
for the triangular and hexagonal lattices.

\section{Self-avoiding walks}\label{sec:saw}

\subsection{Background}

Let $\sL$ be a  lattice with origin  $0$, and assume for simplicity that
$\sL$ is vertex-transitive.
A {\it self-avoiding walk\/} (SAW) is a lattice path that visits
no vertex more than once.

How many self-avoiding walks of length $n$ exist, starting from the origin?
What is the `shape' of such a SAW chosen at random? In particular,
what can be said about
the distance between its endpoints? These and related questions
have attracted a great deal of attention since the notable
paper \cite{HM} of Hammersley and Morton, and never more so than in recent years.
Mathematicians believe but have not proved that a
typical SAW on a two-dimensional lattice  $\sL$, starting
at the origin, converges in
a suitable manner as $n\to\oo$ to a SLE$_{8/3}$ curve.
See \cite{DGKLP, MS, Sch06, Smi07}
for discussion and results.

Paper \cite{HM} contained a number of stimulating
ideas, of which we mention here the use of subadditivity in studying asymptotics.
This method and its elaborations have proved extremely fruitful in
many contexts since. Let $\sS_n$ be the set of
SAWs with length $n$ starting at the origin, with cardinality $\saw_n=
\saw_n(\sL) :=|\sS_n|$.

\begin{lemma}[\cite{HM}]\label{submult}
We have that $\saw_{m+n} \le \saw_m \saw_n$, for $m,n\ge 0$.
\end{lemma}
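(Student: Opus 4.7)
The plan is to exploit the fact that a self-avoiding walk of length $m+n$ starting at the origin decomposes naturally into an initial piece of length $m$ and a final piece of length $n$, each of which, after translation to the origin, is itself self-avoiding. In other words, I would construct an injection
\[
\mathcal{S}_{m+n} \hookrightarrow \mathcal{S}_m \times \mathcal{S}_n,
\]
from which the cardinality bound $\sigma_{m+n} \le \sigma_m \sigma_n$ follows immediately.

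Concretely, given $\pi = (\pi_0, \pi_1, \dots, \pi_{m+n}) \in \mathcal{S}_{m+n}$ with $\pi_0 = 0$, I would set $\pi' := (\pi_0, \dots, \pi_m)$ and $\pi'' := (\pi_m - \pi_m, \pi_{m+1} - \pi_m, \dots, \pi_{m+n} - \pi_m)$. Because $\mathcal{L}$ is vertex-transitive, translation by $-\pi_m$ is a graph-automorphism, so $\pi''$ is a lattice path starting at $0$ of length $n$. Self-avoidance of $\pi$ forces every subwalk (in particular $\pi'$ and $\pi''$) to be self-avoiding, so $\pi' \in \mathcal{S}_m$ and $\pi'' \in \mathcal{S}_n$. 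The map $\pi \mapsto (\pi', \pi'')$ is injective since $\pi$ can be reconstructed by concatenating $\pi'$ with the translate of $\pi''$ by $\pi_m$.

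There is really no serious obstacle here; the only thing to observe carefully is that self-avoidance of the composite walk implies, but is strictly stronger than, the self-avoidance of each piece individually, which is exactly why the decomposition map fails to be surjective and one obtains an inequality rather than an equality. This submultiplicativity is the input one needs in order to apply Fekete's lemma and deduce the existence of the connective constant $\mu(\mathcal{L}) := \lim_{n\to\infty} \sigma_n^{1/n} = \inf_n \sigma_n^{1/n}$, which will be the starting point for the subsequent analysis of the hexagonal case.
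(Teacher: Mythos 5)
Your proof is correct and is essentially the same as the paper's: the paper writes every $\nu\in\sS_{m+n}$ uniquely as $\pi\ast\pi'$ with $\pi\in\sS_m$, $\pi'\in\sS_n$, which is exactly your injection $\nu\mapsto(\pi',\pi'')$ obtained by splitting at the $m$-th vertex and translating the tail back to the origin. Your added remarks on vertex-transitivity and on why the map is not surjective are fine but not needed for the inequality.
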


\begin{proof}
Let $\pi$ and $\pi'$ be finite SAWs starting at the origin, and denote
by $\pi \ast \pi'$ the walk obtained by following $\pi$ from 0 to
its other endpoint $x$, and then following the translated walk $\pi'+x$.
Every $\nu \in \sS_{m+n}$ may be written in a unique way as $\nu=\pi\ast\pi'$
for some $\pi\in\sS_m$ and $\pi'\in\sS_n$. The claim of the lemma follows.
\end{proof}

\begin{thm}\label{conncon}
Let $\sL$ be a vertex-transitive lattice in $d\ge 2$ dimensions with degree $\De$.
The limit $\kappa = \kappa(\sL) =\lim_{n\to\oo} (\saw_n)^{1/n}$ exists and satisfies
$1 < \kappa \le \De-1$.
\end{thm}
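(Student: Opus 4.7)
The theorem has three parts --- existence of $\kappa$, the upper bound $\kappa \le \Delta - 1$, and the strict lower bound $\kappa > 1$ --- and I would treat them in that order. Existence is immediate from Fekete's subadditive lemma applied to $a_n := \log \saw_n$: by the preceding Lemma, $a_{m+n} \le a_m + a_n$, so $a_n/n \to \inf_m a_m/m$, whence $\saw_n^{1/n} \to \kappa := \inf_m \saw_m^{1/m}$ exists. The upper bound is equally routine: a SAW has $\Delta$ choices for its first edge and at most $\Delta - 1$ choices for each subsequent edge (re-using the edge just traversed would revisit the predecessor vertex), giving $\saw_n \le \Delta(\Delta-1)^{n-1}$, and hence $\kappa \le \Delta - 1$ on taking $n$-th roots.

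The strict lower bound is the substantive part, and I expect it to be the main obstacle. My plan is to produce exponentially many SAWs via bridge concatenation. Fix a linear functional $\ell : \RR^d \to \RR$ with $\ell(\tau_i) > 0$ for each $i$, and call a SAW $(w_0, \ldots, w_n)$ an $\ell$-\emph{bridge} if $\ell(w_0) < \ell(w_i) \le \ell(w_n)$ for all $1 \le i \le n$. Two $\ell$-bridges concatenate without self-intersection, since the second strictly overshoots the first in the $\ell$-direction. Using $d \ge 2$ and a suitably generic $\ell$, I would construct two $\ell$-bridges $\pi_1, \pi_2$ of some common length $L$ with endpoints $v_1, v_2 \in \RR^d$ that are linearly independent. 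Each of the $2^k$ concatenations $\pi_{i_1} \ast \cdots \ast \pi_{i_k}$ (with $i_j \in \{1,2\}$) is then a SAW of length $kL$, and linear independence of $v_1, v_2$ ensures that distinct binary sequences produce distinct walks, since the partial-sum endpoints $\sum_{j \le s} v_{i_j}$ differ. This yields $\saw_{kL} \ge 2^k$, so $\kappa \ge 2^{1/L} > 1$.

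The delicate point is the existence of the two bridges $\pi_1, \pi_2$ with equal length and linearly independent endpoints, and here the hypothesis $d \ge 2$ enters essentially. I would begin from bridges $0 \to \tau_1$ and $0 \to \tau_2$, extracted by selecting $\ell$-maximal sub-paths of SAWs joining $0$ to each $\tau_i$ (which exist by connectedness); the endpoints are linearly independent by the hypothesis that $\tau_1, \tau_2$ are, and length matching can be arranged by padding with an auxiliary bridge. This is a standard Hammersley-style bridge manipulation and constitutes the only piece of the argument that is not mere bookkeeping.
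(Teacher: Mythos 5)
Your first two paragraphs coincide with the paper's proof: Fekete's lemma applied to $\log\saw_n$ for existence, and $\saw_n\le\De(\De-1)^{n-1}$ for the upper bound. The paper then explicitly leaves $\kappa>1$ as an exercise, so your third part is an attempt at that exercise, and that is where there is a genuine gap.

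The concatenation $\pi\ast\pi'$ of two bridges requires following the \emph{translate} of $\pi'$ by the endpoint $v$ of $\pi$, and this produces a lattice path only if translation by $v$ is a symmetry of the embedded lattice, i.e.\ only if $v$ lies in the group generated by $\tau_1,\dots,\tau_d$. Vertex-transitivity does not rescue you: the automorphism carrying $0$ to $v$ need not be a translation, so it need not respect $\ell$ and need not carry bridges to bridges. (In the paper's Lemma 2.1 this is harmless, since any automorphism preserves the count $\saw_n$; in your argument it is not.) The hexagonal lattice $\HH$ is the standard warning: it is vertex-transitive, but its translation group has two vertex-orbits, and the endpoint of a typical bridge is not a period, so ``$\pi'+v$'' is not even a lattice path. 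Your instinct to aim for endpoints $\tau_1,\tau_2$ addresses exactly this, but the existence of bridges from $0$ to $\tau_1$ and from $0$ to $\tau_2$ is precisely what your sketch does not deliver: extracting an $\ell$-maximal sub-path of a SAW joining $0$ to $\tau_i$ yields a bridge between two intermediate vertices $u,v$ of that walk, whose endpoints are not $0$ and $\tau_i$ and whose difference $v-u$ need not be a period; nor can you re-root such a bridge at $0$ except via a non-translation automorphism, which destroys the bridge property. The ``padding with an auxiliary bridge'' step presupposes the same concatenation machinery (and must also keep the two padded bridges distinct), so it cannot repair this. Thus the substantive statement your proof needs --- two distinct $\ell$-bridges from $0$, of equal length, whose endpoints are lattice periods on which $\ell$ is positive --- remains unproved, and establishing it (for instance by a Hammersley--Welsh-type surgery on the translates $\gamma+k\tau_1$ of a SAW $\gamma$ from $0$ to $\tau_1$) is a real argument rather than bookkeeping. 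A minor point in the other direction: linear independence of $v_1,v_2$ is more than you need, since distinctness of the two bridges already makes the $2^k$ concatenations pairwise distinct.
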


\begin{proof}
By Lemma~\ref{submult}, $x_m=\log\saw_m$ satisfies
the `subadditive inequality'
\begin{equation*}\label{subineq}
x_{m+n} \le x_m +x_n.
\end{equation*}
By the subadditive-inequality theorem\footnote{Sometimes known as Fekete's Lemma.}
(see \cite[App. I]{Grimmett_Percolation}),
the limit
$$
\lambda=\lim_{n\to\oo} \frac 1n x_n
$$
exists, and we write $\kappa= e^\la$.

Since there are at most $\De-1$ choices for each step of a SAW (apart from the first),
we have that $\saw_n \le \De(\De-1)^{n-1}$, giving that $\kappa \le \De-1$.
It is left as an exercise to show $\kappa>1$.
\end{proof}

The constant $\kappa = \kappa(\sL)$ is called the
{\it connective constant\/} of the lattice $\sL$. The exact value of
$\kappa=\kappa(\LL^d)$ is unknown for every $d\ge 2$, see \cite[Sect.\ 7.2, pp.\ 481--483]{BDH}.
As explained in the next section, the hexagonal lattice has a special structure which
permits an exact calculation, and our main purpose here is to present the proof of this.
In addition, we include next
a short discussion of critical exponents, for a fuller discussion of which the reader is
referred to \cite{BDGS,MS,slade10}

By Theorem~\ref{conncon}, $\saw_n$ grows exponentially
in the manner of  $\kappa^{n(1+\o(1))}$.
It is believed by mathematicians and physicists
that there is a power-order correction, in that
$$
\saw_n \sim A_1 n^{\g-1} \kappa^n,
$$
where the exponent $\g$ depends only on the number of dimensions and not otherwise
on the lattice (here and later, there should be an additional logarithmic correction in four dimensions).
Furthermore, it is believed that $\g=\frac {43}{32}$ in two dimensions.
We mention two further critical exponents. It is believed that
the (random) end-to-end distance $D_n$ of a typical $n$-step SAW satisfies
$$
E(D_n^2) \sim A_2n^{2\nu},
$$
and furthermore $\nu=\frac 34$ in two dimensions. Finally, let
$Z_v(x) = \sum_{k} \si_k(v)x^k$ where $\si_k(v)$ is the number of $k$-step SAWs
from the origin to the vertex $v$. The generating function $Z_v$ has
radius of convergence $\kappa^{-1}$, and it is believed that
$$
Z_v(\kappa^{-1}) \sim A_3 |v|^{-(d-2+\eta)}\qq \text{as } |v| \to \oo,
$$
in $d$ dimensions. Furthermore, $\eta$ should satisfy the so-called Fisher relation
$\g=(2-\eta)\nu$, giving that $\eta=\frac 5{24}$ in two dimensions.
The numerical predictions for these exponents in two dimensions
are explicable on the conjectural basis that a typical $n$-step SAW
in two dimensions converges to SLE$_{8/3}$ as $n\to\oo$
(see \cite{Beffara2010,SLW04b}).

\subsection{Hexagonal-lattice connective constant}

\begin{thm}[\cite{Dum-S}]\label{hex-conn}
The connective constant of the hexagonal lattice $\HH$ satisfies $\kappa(\HH)=
\sqrt{2+\sqrt 2}$.
\end{thm}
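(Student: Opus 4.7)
The plan is to study the weighted SAW generating function $Z(x) = \sum_n \saw_n(\HH)\, x^n$ and show that its radius of convergence is exactly $x_c := 1/\sqrt{2+\sqrt{2}}$, which by Theorem~\ref{conncon} is equivalent to the stated value of $\kappa(\HH)$. Following Duminil-Copin and Smirnov, the key device is a complex-valued \emph{parafermionic observable} $F$ defined on the midpoints of edges of $\HH$, combining the walk weight $x^{|\gamma|}$ with a phase depending on the winding of the walk, engineered so that $F$ satisfies a discrete analogue of the Cauchy--Riemann equations at one specific value of $x$.

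Concretely, for each midedge $z$ set
$$F(z) = \sum_{\gamma: 0 \to z} e^{-i\sigma W(\gamma)}\, x^{|\gamma|},$$
where the sum is over SAWs $\gamma$ from the origin to $z$, and $W(\gamma) \in \tfrac{\pi}{3}\ZZ$ is the total turning of the tangent vector of $\gamma$. The first and crucial step is to prove the vertex relation
$$(p-v)F(p) + (q-v)F(q) + (r-v)F(r) = 0$$
at every interior vertex $v$, where $p,q,r$ are the three incident midedges, viewed as complex numbers. One proves this by grouping SAWs contributing to the three sums according to their behaviour near $v$ and keeping track of the winding increments; the identity then collapses to a single trigonometric equation whose solution pins down exactly $\sigma = 5/8$ and $x = x_c$. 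This algebraic coincidence is the true heart of the proof.

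Armed with the local identity, the second step is a discrete Stokes computation. Choose a trapezoidal domain $D_{L,T} \subset \HH$ whose boundary decomposes into a bottom edge of length $2L+1$, a parallel top edge, and two slanted sides of length $T$ at angle $\pi/3$ to the bottom. Summing the local relation over all interior vertices cancels the contribution of each interior midedge (it appears twice, with opposite-pointing edge vectors), leaving only a boundary sum. The winding $W(\gamma)$ of a walk ending on the boundary depends only on which of the four sides contains its endpoint, so the boundary sum rearranges into a positive combination
$$c_A\, A_L^T(x_c) \;+\; c_B\, B_L^T(x_c) \;+\; c_E\, E_L^T(x_c) \;=\; 1,$$
where the coefficients are explicit trigonometric factors and $A_L^T$, $B_L^T$, $E_L^T$ are partition functions of walks terminating on the top, the sides, and the bottom of $D_{L,T}$ respectively.

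The final step extracts both bounds on $\kappa$ from this identity. For the upper bound $\kappa(\HH) \le \sqrt{2+\sqrt{2}}$, monotonicity in $T$ makes $B_L^T(x_c)$ increasing in $T$, and combining the boundary identity with a decomposition of a general SAW into bridges in the spirit of Hammersley--Welsh gives $Z(x_c) < \oo$. For the lower bound $\kappa(\HH) \ge \sqrt{2+\sqrt{2}}$, one notes that for $x > x_c$ the analogous monotonicity forces the side partition functions to grow without control, which one converts, again via a bridge decomposition, into divergence of $Z(x)$. The main obstacle throughout is the discovery and verification of the local discrete-holomorphicity identity with the precise parameters $\sigma = 5/8$ and $x = x_c$; once this miraculous algebraic input is in place, the remainder of the proof is a contour-type summation together with a standard bridge argument.
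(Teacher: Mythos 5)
Up to and including the boundary identity, your sketch is the argument of Duminil-Copin and Smirnov as presented in the paper: the observable with turning-angle phase, the vertex relation pinned at $\sigma=5/8$ and $x=x_c=1/\sqrt{2+\sqrt 2}$, summation over a trapezoidal domain, and a boundary identity expressing a fixed positive combination of the three boundary partition functions as $1$. The problems are in the last step, where the two bounds on $\kappa$ are extracted. For the upper bound you claim the bridge decomposition yields $Z(x_c)<\oo$. That statement is false: submultiplicativity gives $\sigma_n\ge\kappa^n$, so $Z$ diverges at its radius of convergence, and the paper proves precisely $Z(x_c)=\oo$. What the identity actually supplies is the uniform bound $\ups^{x_c}_v\le 1$ on the generating function of walks crossing a strip of height $v$; since such walks have length at least $v$, one gets $\ups^{x}_v\le (x/x_c)^v$ for $x<x_c$, and then the Hammersley--Welsh bound $Z(x)\le 2\prod_{v\ge 1}(1+\ups^x_v)^2$ converges. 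This proves $Z(x)<\oo$ for $x<x_c$, which is all that $\kappa\le\sqrt{2+\sqrt2}$ requires; at $x=x_c$ the product diverges and no finiteness can be salvaged (nor is it true).

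The more serious gap is the lower bound. Your proposal --- that for $x>x_c$ ``monotonicity forces the side partition functions to grow without control'', converted into divergence via a bridge decomposition --- is not an argument: the vertex identity, and hence every quantitative consequence of the observable, holds only at $x=x_c$, so no such monotonicity is available above $x_c$; moreover a bridge decomposition bounds $Z$ from \emph{above} and cannot by itself produce divergence. The actual proof works at $x=x_c$ and rests on a dichotomy your sketch omits. Writing $\la_v$, $\tau_v$, $\ups_v$ for the limits, as the width tends to $\oo$, of the generating functions (at $x_c$) of walks ending on the bottom, the vertical sides, and the top of the trapezoid of height $v$ (these limits exist by monotonicity together with the identity), either $\tau_v>0$ for some $v$, in which case $\tau_{h,v}\ge\tau_v$ for every width $h$ and summing over $h$ gives $Z(x_c)=\oo$; or $\tau_v=0$ for all $v$, in which case the identity reduces to $c_l\la_v+\ups_v=1$ and one needs the surgery estimate $\la_{v+1}-\la_v\le x_c(\ups_{v+1})^2$, obtained by cutting a walk at its first highest vertex into two pieces and adding two half-edges, whence by induction $\ups_v\ge \min\{\ups_1,\,1/(c_l x_c)\}/v$ and $Z(x_c)\ge\sum_v\ups_v=\oo$. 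This case analysis and surgery is the second essential idea of the proof, after the local identity, and without it the bound $\kappa\ge\sqrt{2+\sqrt2}$ is unproved.
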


This result of Duminil-Copin and Smirnov provides a rigorous and provocative
verification of a prediction of Nienhuis \cite{Nienhuis} based on
conformal field theory. The proof falls short of a proof of
conformal invariance for self-avoiding walks on $\HH$. The remainder
of this section contains an outline of the proof of Theorem~\ref{hex-conn},
and is drawn from \cite{Dum-S}\footnote{The
proof has been re-worked in \cite{K11}.}.

The reader may wonder about the special nature of the hexagonal lattice. It is something of a mystery
why certain results for this lattice (for example, Theorem~\ref{hex-conn}, and the conformal
scaling limit of `face' percolation) do not yet extend to other lattices.

\begin{figure}[t]
 \centering
   \includegraphics{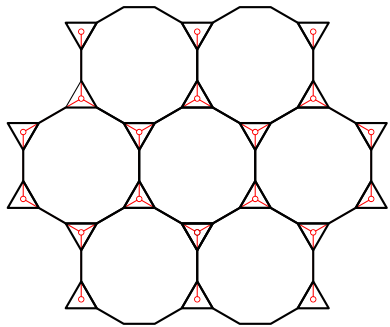}
  \caption{The Archimedean lattice $(3,12^2)$ is obtained by replacing each vertex
of the hexagonal lattice $\HH$ by a triangle.}
  \label{fig:312}
\end{figure}

We note an application of Theorem~\ref{hex-conn} to the  lattice illustrated in
Figure~\ref{fig:312}, namely the Archimedean lattice denoted $\AA = (3,12^2)$
and known also as a `Fisher lattice' after \cite{MF66}. Edges of $\AA$ lying in a triangle
are called \emph{triangular}. For simplicity, we shall consider
only SAWs of $\AA$ that start with a triangular edge
and finish with a non-triangular edge.
The non-triangular edges of such a SAW $a$ induce
a SAW $h$ of $\HH$. Furthermore, for given $h$, the corresponding $a$ are
obtained by replacing each vertex $v$ of $h$ (other
than its final vertex) by either a single edge in the
triangle of $\AA$ at $v$, or by two such edges.
It follows that
the generating function of such walks is
$$
Z^\AA(a) =\sum_k\si_k(\HH)(a^2+a^3)^k = Z(a^2+a^3),
$$
where $Z(x) := \sum_n \si_n(\HH)x^k$.
The radius of convergence of $Z^\AA$ is $1/\kappa(\AA)$, and we deduce the
following formula of \cite{JenGutt}:
\begin{equation}
\frac 1 {\kappa(\AA)^2} + \frac 1 {\kappa(\AA)^3}
=\frac 1{\kappa(\HH)}.\label{connkag}
\end{equation}

One may show similarly that the critical exponents $\gamma$, $\nu$, $\eta$ are equal
for $\AA$ and $\HH$, assuming they satisfy suitable definitions. The details
are omitted.

\begin{proof}[Proof of Theorem~\ref{hex-conn}]
This exploits the relationship between  $\RR^2$ and the Argand diagram
of the complex numbers $\CC$. We embed $\HH=(V,E)$ in $\RR^2$ in a natural way:
edges have  length 1 and are inclined at angles  $\pi/6$, $\pi/2$, $5\pi/6$ to the
$x$-axis, the origins of $\sL$ and $\RR^2$ coincide,
and the line-segment from $(0,0)$ to $(0,1)$ is an edge of $\HH$. Any point in $\HH$ may
thus be represented by a complex number.
Let $\sM$ be the set of  midpoints of edges of $\HH$.
Rather than counting paths between \emph{vertices} of $\HH$, we count paths between
\emph{midpoints}.

Fix $a \in \sM$, and let
$$
Z(x) = \sum_{\gamma} x^{|\gamma|}, \qq x \in (0,\oo),
$$
where the sum is over all SAWs $\gamma$ starting at $a$, and $|\gamma|$ is the number
of vertices
visited by $\gamma$. Theorem~\ref{hex-conn} is equivalent to the assertion that
the radius of convergence of $Z$ is $\chi := 1/\sqrt{2+\sqrt 2}$.
We shall therefore prove that
\begin{align}
Z(\chi) &= \oo,\label{Z1}\\
Z(x) &< \oo \quad\text{ for } x < \chi. \label{Z2}
\end{align}

Towards this end we  introduce a  function that records the turning angle of a SAW.
A SAW $\g$ departs from its starting-midpoint $a$ in one of two possible directions,
and its direction changes by $\pm \pi/3$ at each vertex. On arriving at
its other endpoint $b$,
it has turned through some total \emph{turning
angle} $T(\g)$, measured anticlockwise in radians.

We work within some bounded region $M$ of $\HH$.
Let $S \subseteq V$ be a finite set of vertices that induces a connected subgraph,
and let $M=M_S$ be the
set of midpoints of edges touching points in $S$. Let $\De M$ be the set of midpoints for
which the corresponding edge of $\HH$
has exactly one endpoint in $S$. Later in the proof we shall restrict $M$ to a region of the type
illustrated in Figure~\ref{fig:omega}.

\begin{figure}[t]
 \centering
   \includegraphics[width=0.7\textwidth]{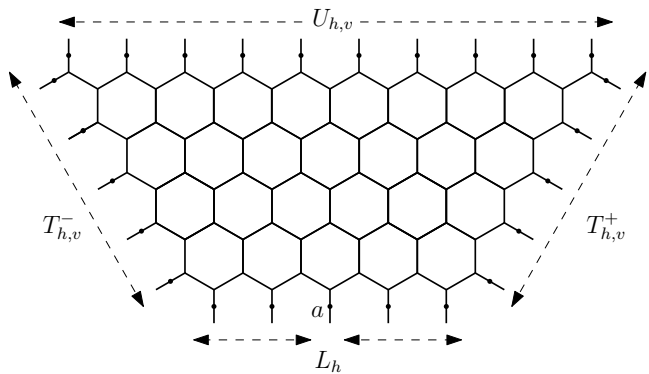}
  \caption{The region $M_{h,v}$ has $2h+1$ midpoints on the bottom side,
and $v$ at the left/right sides. In this illustration, we have $h=2$ and $v=5$.}
  \label{fig:omega}
\end{figure}

Let $a \in \De M$ and $\si,x \in(0,\oo)$,
and define the so-called `parafermionic observable' of \cite{Dum-S} by
\begin{equation}\label{paraf}
F_{a,M}^{\si,x}(z) = \sum_{\gamma:a \to z} e^{-i\si T (\g)}x^{|\g|}, \qq z \in M,
\end{equation}
where the summation is over all SAWs from $a$ to $z$ lying entirely in $M$.
We shall suppress some of the notation in  $F_{a,M}^{\si,x}$ when no ambiguity ensues.
The key ingredient of the proof is the following lemma,
which is strongly suggestive of discrete analyticity.

\begin{lemma}\label{hex-mainlem}
Let $\si=\frac 58$ and $x= \chi$. For $v \in S$,
\begin{equation}\label{hex-crit}
(p-v)F(p) + (q-v)F(q) + (r-v)F(r) = 0,
\end{equation}
where $p, q, r \in \sM$ are the midpoints of the three edges incident to $v$.
\end{lemma}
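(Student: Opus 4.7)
The plan is to prove \eqref{hex-crit} via a local combinatorial argument at the vertex $v$. First I would expand the left-hand side via \eqref{paraf} as a weighted sum over SAWs $\gamma$ from $a$ that lie in $M$ and end at some midpoint $m_\gamma \in \{p,q,r\}$, each carrying the weight $(m_\gamma - v)\,e^{-i\sigma T(\gamma)} x^{|\gamma|}$. I would then partition these walks into orbits of size $2$ or $3$ under a local modification around $v$, and show that for $\sigma=5/8$ and $x=\chi$ the weights in each orbit sum to zero.

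Case (i): $\gamma$ does not visit $v$. Then $\gamma$ reaches its terminal midpoint (say $p$) from the endpoint of $e_p$ opposite $v$, and one may extend $\gamma$ through $v$ along either of the other two edges at $v$, yielding walks $\gamma',\gamma''$ of length $|\gamma|+1$ ending at $q,r$. These triples $\{\gamma,\gamma',\gamma''\}$ partition the walks avoiding $v$. Placing $v$ at the origin and using the three-fold symmetry to write $p-v=\tfrac12 e^{i\theta}$, $q-v=\tfrac12 e^{i(\theta+2\pi/3)}$, $r-v=\tfrac12 e^{i(\theta-2\pi/3)}$, together with the $\mp\pi/3$ turning increments at $v$, the triple's contribution factors as
\[
\tfrac12 e^{i\theta}\,e^{-i\sigma T(\gamma)}\,x^{|\gamma|}\bigl[\,1 + 2x\cos\!\bigl(2\pi/3 + \sigma\pi/3\bigr)\,\bigr].
\]
For $\sigma=5/8$ one has $\cos(7\pi/8)=-\tfrac12\sqrt{2+\sqrt 2}$, so the bracket vanishes precisely when $x=\chi=1/\sqrt{2+\sqrt 2}$.

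Case (ii): $\gamma$ visits $v$. Then $\gamma$ uses exactly two of $\{e_p,e_q,e_r\}$ to pass through $v$, and its terminal edge is one of these three. Here I would define an involution that swaps the roles of the two edges at $v$ used by $\gamma$ (equivalently, reverses the portion of $\gamma$ on one side of $v$ and reassigns the endpoint among the available edges). The paired walks have a common length and their turning angles differ by a fixed multiple of $\pi/3$ coming from the local geometry; a short phase computation using the same $e^{\pm i\pi/3}$ factors and three-fold symmetry shows the paired contributions cancel under the same choice $(\sigma,x)=(5/8,\chi)$. Summing over all orbits then yields \eqref{hex-crit}.

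The main obstacle will be constructing the Case~(ii) involution cleanly — in particular, handling walks whose terminal edge is itself one of the two edges used to traverse $v$, and bookkeeping the exact turning-angle shift induced by the swap. The nontrivial feature of the whole argument is that the values $\sigma=5/8$ and $x=\chi$ forced by the three-walk triples of Case~(i) must simultaneously balance the two-walk pairs of Case~(ii); this is the combinatorial miracle responsible for the exact formula $\kappa(\HH)=\sqrt{2+\sqrt 2}$, and is specific to the interplay between the three-fold vertex symmetry of $\HH$ and the $\pm\pi/3$ turning structure.
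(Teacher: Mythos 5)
Your Case~(i) is, up to notation, exactly the paper's treatment of $\sP_1\cup\sP_2$: group each walk that reaches a midpoint without visiting $v$ with its two one-vertex extensions through $v$, and note that the bracket $1+2x\cos(2\pi/3+\si\pi/3)$ vanishes at $x=\chi$ when $\si=\tfrac58$. That computation is correct. Two small slips: the extensions \emph{do} visit $v$, so your dichotomy ``visits $v$ / does not visit $v$'' double counts them as literally stated (the correct residual class is the paper's $\sP_3$: walks that traverse two full edges at $v$ and later return to end at the third midpoint from outside); and the configuration you worry about, a walk ``whose terminal edge is itself one of the two edges used to traverse $v$'', cannot occur, since ending at the midpoint of an already traversed edge would force the walk to revisit a vertex.

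The genuine gap is in Case~(ii). The pairing needed is the paper's: fix the initial piece $\rho$ from $a$ to the first midpoint, keep the outer piece $\tau$ as a set but traverse it in the opposite direction, and swap which of the two remaining midpoints is crossed to inside the star. Your claim that the paired walks' turning angles ``differ by a fixed multiple of $\pi/3$ coming from the local geometry'' is not correct as stated, and it is exactly where the content lies: reversing $\tau$ negates its winding, and one must show that the total turning of the two walks, measured relative to $T(\rho)$, is exactly $+4\pi/3$ and $-4\pi/3$ --- exactly, not merely modulo $2\pi$, since $e^{-i\si 2\pi k}\ne 1$ for $\si=\tfrac58$. This is a topological fact: $\tau$ together with the two half-edges at $v$ forms a Jordan curve, and the sign of its $\pm2\pi$ total turning is fixed because $\rho$ and $a$ lie outside it. Granting that, the paired contributions sum to a multiple of $2\cos\bigl(\tfrac23\pi(2\si+1)\bigr)$, which vanishes at $\si=\tfrac58$ \emph{for every} $x$, because the two walks have the same length and the $x$-factor is common. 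Your expectation that $x=\chi$ must also balance Case~(ii) (the ``combinatorial miracle'') is therefore misplaced --- only Case~(i) pins down $x$ --- and, together with the vagueness of the proposed involution, shows that this half of the argument has not actually been carried out; as written, Case~(ii) is a placeholder rather than a proof.
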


The quantities in \eqref{hex-crit} are to be interpreted as complex numbers.

\begin{proof}[Proof of Lemma~\ref{hex-mainlem}]
Let $v \in S$. We assume for definiteness that the star at $v$ is as drawn
on the left of Figure~\ref{fig:vpqr0}\footnote{This and later figures are viewed best
in colour.}.

\begin{figure}[t]
 \centering
   \includegraphics[width=0.8\textwidth]{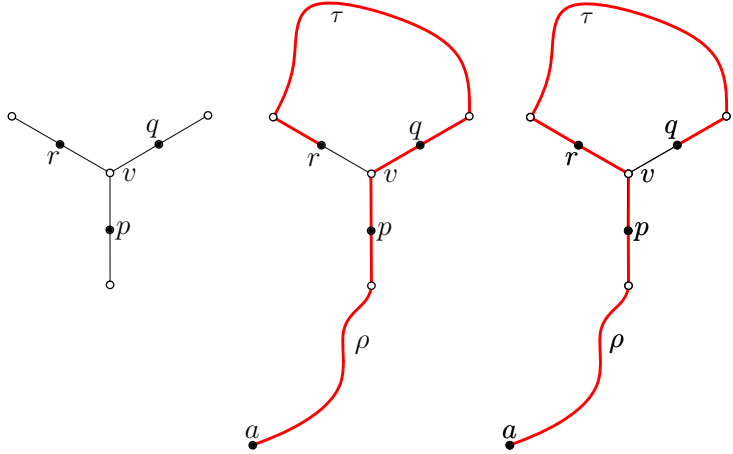}
  \caption{The star centred at the vertex $v$ is on the left.
Two SAWs lying in $\sP_3$, visiting the midpoints of edges in the respective orders $pqr$ and $prq$.
They follow the same SAWs $\rho$ and $\tau$ (in one or the other directions), and differ
only within the star.}
  \label{fig:vpqr0}
\end{figure}

Let $\sP_k$ be the set of SAWs of $M$ starting at $a$  whose intersection
with $\{p,q,r\}$ has cardinality $k$, for $k=1,2,3$.
We shall show that the aggregate contribution to \eqref{hex-crit}
of $\sP_1\cup \sP_2$ is zero, and similarly of $\sP_3$.

Consider first $\sP_3$. Let $\g \in \sP_3$, and write $b_1$,
$b_2$, $b_3$ for the  ordering of $\{p,q,r\}$ encountered along $\g$ starting at $a$.
Thus $\g$ comprises:
\begin{numlist}
\item[--] a SAW $\rho$ from $a$ to $b_1$,
\item[--] a SAW of length 1 from $b_1$ to $b_2$,
\item[--] a SAW  $\tau$ from $b_2$ to $b_3$ that is disjoint from $\rho$,
\end{numlist}
as illustrated in Figure~\ref{fig:vpqr0}.
We partition $\sP_3$ according to the pair $\rho$, $\tau$. For given $\rho$, $\tau$,
the aggregate contribution of these two paths to the left side of \eqref{hex-crit} is
\begin{equation}\label{g3}
c \left( \ol \th e^{-i\si 4\pi/3} + \th e^{i\si4\pi/3  }  \right)
\end{equation}
where $c=(b_1-v) e^{-i\si T(\rho)}x^{|\rho|+|\tau|+1}$ and
$$
\th=\frac{q-v}{p-v} = e^{i2\pi/3}.
$$
The parenthesis in \eqref{g3} equals $2\cos\bigl(\frac23\pi(2\si+1)\bigr)$ which
is $0$ when $\si=\frac 58$.

\begin{figure}[t]
 \centering
   \includegraphics{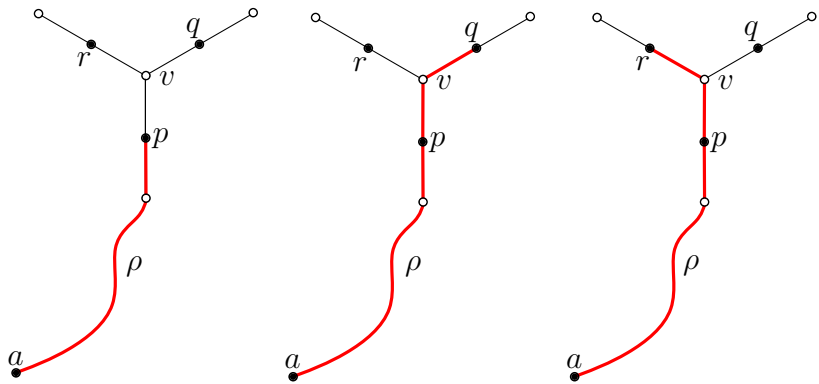}
  \caption{The left SAW, denoted $\g_p$, intersects the set $\{p,q,r\}$ once only.
The other two paths, denoted $\g_q$ and $\g_r$,  are extensions of the first.}
  \label{fig:vpqr}
\end{figure}

Consider now $\sP_1 \cup \sP_2$. This set may be
partitioned according to the point  $b$ in $\{p,q,r\}$ visited first, and by the route $\rho$ of the SAW from
$a$ to $b$. For given $b$, $\rho$, there are exactly three such
SAWs, as in Figure~\ref{fig:vpqr}.
Their aggregate contribution to
the left side of \eqref{hex-crit} is
$$
c\left(1+x\th e^{i\si\pi/3} + x\ol \th e^{-i\si\pi/3} \right)
$$
where $c= (b-v)e^{-i\si T(\g_b)}x^{|\g_b|}$.
With $\si=\frac58$, we set this to $0$ and solve for $x$, to find
$x=1/[2\cos(\pi/8)] = \chi$. The lemma is proved.
\end{proof}

We return to the proof of Theorem~\ref{hex-conn}, and we set $\si=\frac58$ henceforth.
Let $M=M_{h,v}$ be as in Figure~\ref{fig:omega}, and let
$L_h$, $T^\pm_{h,v}$, $U_{h,v}$ be the sets of midpoints indicated in
the figure (note that $a$ is excluded from $L_h$). Let
$$
\la^x_{h,v} = \sum_{\g:a \to L_{h}} x^{|\g|},
$$
where the sum is over all SAWs in $M_{h,v}$ from $a$ to some point  in $L_{h}$.
All such $\g$ have $T(\g) = \pm\pi$.
The sums $\tau^{\pm,x}_{h,v}$ and $\ups_{h,v}^x$ are defined similarly in terms of SAWs ending in
$T^\pm_{h,v}$ and $U_{h,v}$ respectively, and all such $\g$ have $T(\g) = \mp 2\pi/3$ and $T(\g) = 0$
\resp.

In summing \eqref{hex-crit} over all vertices $v$ of $M_{h,v}$, with $x=\chi$,
all contributions cancel except those from
the boundary midpoints. Using the symmetry of $M_{h,v}$,
we deduce that
$$
-iF^\chi(a) - i\Re(e^{i\si\pi}) \la^\chi_{h,v} +
i\th e^{-i\si2\pi/3}\tau^{-,\chi}_{h,v} + i\ups^\chi_{h,v} +
i\ol{\th} e^{i\si 2\pi/3} \tau^{+,\chi}_{h,v}=0.
$$
Divide by $i$,  and use
the fact that $F^\chi(a) = 1$, to obtain
\begin{equation}\label{g4}
c_l\la^\chi_{h,v} + c_t\tau^\chi_{h,v} + \ups^\chi_{h,v} = 1,
\end{equation}
where $\tau_{h,v}= \tau^+_{h,v} + \tau^-_{h,v}$,  $c_l = \cos(3\pi/8)$, and $c_t = \cos(\pi/4)$.

Let $x \in (0,\oo)$.
Since $\la^x_{h,v}$ and $\ups^x_{h,v}$ are increasing
in $h$, the limits
$$
\la^x_v = \lim_{h\to\oo} \la^x_{h,v},\quad
\ups^x_v = \lim_{h\to\oo} \ups^x_{h,v},
$$
exist. Hence, by \eqref{g4}, the decreasing limit
\begin{equation}\label{g7}
\tau^\chi_{h,v} \downarrow \tau^\chi_v\qquad \text{as } h \to \oo,
\end{equation}
exists also. Furthermore, by \eqref{g4},
\begin{equation}\label{g5}
c_l\la^\chi_v + c_t\tau^\chi_v + \ups^\chi_v = 1.
\end{equation}
We shall use \eqref{g7}--\eqref{g5} to prove \eqref{Z1}--\eqref{Z2} as follows.

\noindent
\emph{Proof of \eqref{Z1}.} There are two cases depending on whether or not
\begin{equation}\label{g6}
\tau_v^\chi>0 \qq \text{for some } v \ge 1.
\end{equation}
Assume first that \eqref{g6} holds\footnote{In fact, \eqref{g6} does not hold, see \cite{BGG}.},
and pick $v\ge 1$ accordingly.
By \eqref{g7}, $\tau^\chi_{h,v} \ge \tau_v^\chi$
for all $h$, so that
$$
Z(\chi) \ge \sum_{h=1}^\oo \tau^\chi_{h,v} = \oo,
$$
and \eqref{Z1} follows.

\begin{figure}[t]
 \centering
   \includegraphics[width=0.7\textwidth]{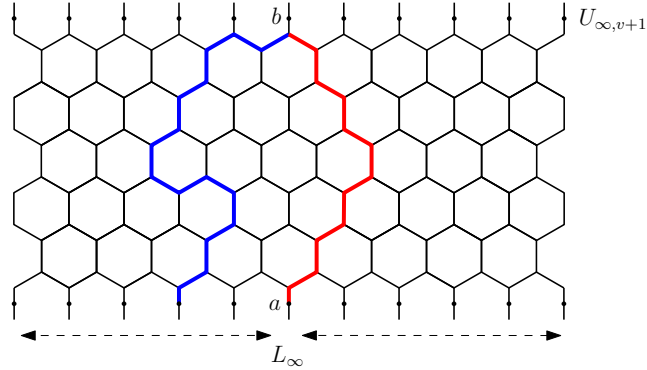}
  \caption{A SAW contributing to $\la^\chi_{v+1}$ but not $\la^\chi_v$
is broken at its first highest vertex into two SAWs coloured red and blue.
By adding two half-edges, we obtain two SAWs from $b\in U_{\oo,v+1}$ to $L_{\oo}$.}
  \label{fig:bkl+}
\end{figure}

Assume now that \eqref{g6} is false so that, by \eqref{g5},
\begin{equation}\label{g8}
c_l\la^\chi_v + \ups^\chi_v = 1,\qq  v \ge 1.
\end{equation}
We propose to bound $Z(\chi)$ below in terms of the $\ups^\chi_v$. The
difference $\la^\chi_{v+1} - \la^\chi_v$ is the sum
of $\chi^{|\g|}$ over
all $\g$ from $a$ to $L_{\oo}$ whose highest vertex lies between $U_{\oo,v}$
and $U_{\oo,v+1}$. See Figure~\ref{fig:bkl+}.
We split such a $\g$ into two pieces
at its first highest vertex, and add two half-edges
to obtain two self-avoiding paths from a given midpoint, $b$ say, of $U_{\oo,v+1}$ to $L_{\oo}\cup\{a\}$.
Therefore,
$$
\la^\chi_{v+1} - \la^\chi_v \le \chi(\ups^\chi_{v+1})^2, \qq v \ge 1.
$$
By \eqref{g8},
$$
c_l\chi(\ups^\chi_{v+1})^2 + \ups^\chi_{v+1} \ge \ups^\chi_v, \qq v \ge 1,
$$
whence, by induction,
$$
\ups^\chi_v \ge \frac 1v \min \left\{\ups^\chi_1,\frac1{c_l\chi }\right\}, \qq v \ge 1.
$$
Therefore,
$$
Z(\chi) \ge \sum_{v=1}^\oo \ups^\chi_v = \oo.
$$

\noindent
\emph{Proof of \eqref{Z2}.} SAWs that start at a lowermost vertex and end at an uppermost
vertex (or \emph{vice versa}) are called \emph{bridges}.
Hammersley and Welsh \cite{HW62} showed, as follows, that
any SAW may be decomposed in a unique way into sub-walks that are bridges.
They used this to obtain a bound on the rate of convergence in the limit
defining the connective constant.

\begin{figure}[t]
 \centering
   \includegraphics[width=0.6\textwidth]{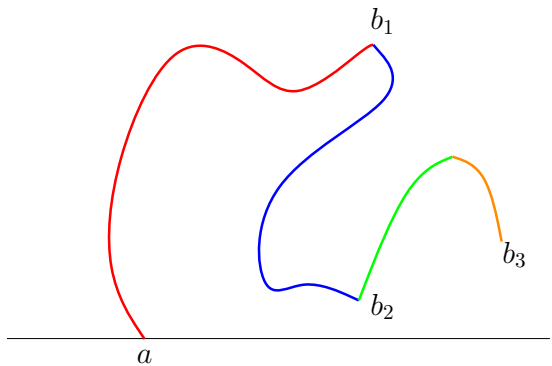}
  \caption{A half-plane walk $\g$ decomposed into four bridges.}
  \label{fig:bridge}
\end{figure}

Consider first a
(finite) SAW $\g$ in $M_{\oo,\oo}$  starting at $a$. From amongst
its highest vertices, choose the last, $b_1$ say.
Consider the sub-walk from $b_1$ onwards, and find the final lowest vertex, $b_2$ say.
Iterate the procedure until the endpoint of $\g$ is reached,
as illustrated in Figure~\ref{fig:bridge}. The outcome is a decomposition of $\g$
into an ordered set of bridges with vertical displacements written $T_0 > T_1 > \cdots > T_j$.

Now let $\g$ be a SAW from $a$ (not necessarily a half-plane walk).
Find the earliest vertex of $\g$ that is lowest, $c$ say. Then $\g$ may be decomposed into
a walk $\g_1$ from $a$ to $c$, together with the remaining walk $\g_2 = \g\setminus \g_1$.
On applying the above procedure to $\g_1$ viewed backwards, we obtain
a decomposition into bridges with vertical displacements written $T_{-i} < \cdots <T_{-1}$,
Similarly, $\g_2$ has a decomposition with  $T_0 > \cdots > T_j$. The original walk $\g$ may
be reconstructed from knowledge of the constituent bridges. A little further care
is needed in our case since our walks connect midpoints rather than vertices.

One may deduce a bound for $Z(x)$ in terms of the $\ups_v^x$.
Any SAW from $a$ has two choices
for initial direction, and thereafter has a bridge decomposition as above. Therefore,
\begin{equation}\label{G40}
Z(x) \le 2\sum_{\substack{T_{-i} < \cdots < T_{-1}\\ T_0 > \cdots > T_j}}
\,\prod_{k=-i}^j \ups_{T_k}^x
= 2\prod_{T=1}^\oo (1+\ups^x_T)^2.
\end{equation}
It remains to bound the right side.

Since all SAWs from $a$ to $U_{h,v}$ have length at least $h$,
$$
\ups^x_{h} \le \left(\frac x \chi\right)^h \ups^\chi_{h} \le \left(\frac x \chi\right)^h,
\qq x\le\chi.
$$
Therefore,
$$
\prod_{T=1}^\oo (1+\ups^x_T) < \oo, \qq x < \chi,
$$
and \eqref{Z2} follows by \eqref{G40}.
\end{proof}

\section{Bond percolation}\label{sec:bp}

\subsection{Background}

Percolation is the fundamental stochastic model for spatial disorder.
We consider bond percolation on several lattices including
the (two-dimensional) square, triangular
and hexagonal lattices of Figure~\ref{fig:lattices},
and the (hyper)cubic
lattices $\LL^d=(\ZZ^d,\EE^d)$ in $d \ge 3$ dimensions. Detailed accounts of the basic theory
may be found in \cite{Grimmett_Percolation, Grimmett_Graphs}.

Percolation comes in two forms, `bond' and `site', and we concentrate here
on the bond model. Let $\sL=(V,E)$ be  a lattice with origin denoted 0,
and let $p\in[0,1]$. Each edge $e \in E$ is designated
either {\it open\/} with probability $p$, or {\it closed\/} otherwise,
different edges receiving independent states. We think of an open
edge as being open to the passage of some material such as disease, liquid,
or infection. Suppose we remove
all closed edges, and consider the remaining open subgraph of the lattice.
Percolation theory is concerned with the geometry of this open
graph. Of special interest is the size and shape of
the open cluster $C_0$ containing the origin, and in particular the
probability that $C_0$ is infinite.

The sample space is the set $\Om=\{0,1\}^{E}$ of $0/1$-vectors $\om$ indexed
by the edge-set $E$; here, 1 represents `open', and 0 `closed'. The
probability measure is product measure $\PP_p$ with density $p$.

For $x,y\in V$, we write $x \lra y$ if there exists an open path
joining $x$ and $y$. The {\it open cluster\/} at $x$ is the
set $C_x =\{y: x\lra y\}$ of all
vertices reached along open paths from the vertex $x$,
and we write $C=C_0$.
The {\it percolation probability\/} is the function
$\theta(p)$ given by
$$
\theta(p) = \PP_p(|C|=\oo),
$$
and the \emph{critical probability} is defined by
\begin{equation}\label{defcritprob}
\pc = \pc(\sL) = \sup\{p: \theta(p)=0\}.
\end{equation}
It is elementary that $\theta$ is a
non-decreasing function, and therefore,
$$
\theta(p)\, \begin{cases} = 0 &\text{if } p<\pc,\\
>0 &\text{if } p>\pc.
\end{cases}
$$
It is a fundamental fact that $0<\pc(\sL)<1$ for any lattice
$\sL$ in two or more dimensions, but it is unproven in general that
no infinite open cluster exists when $p=\pc$.

\begin{conj}\label{thetapc0}
For any lattice $\sL$ in $d \ge 2$ dimensions, we have that
$\theta(\pc) = 0$.
\end{conj}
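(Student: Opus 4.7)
The plan is to split by dimension, since no single technique is known to cover all $d\ge 2$; in fact I should flag at the outset that this is genuinely open in general, and what follows is really a strategy for the dimensions in which it is currently reachable, together with a description of where the obstruction sits.

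For $d=2$ on a self-dual lattice like $\LL^2$, the natural route is Zhang's argument combined with the Burton--Keane uniqueness theorem. First, I would establish that $\pc(\LL^2)=\tfrac12$; in the present text this will come out of the \rc\ result of Beffara--Duminil-Copin with $q=1$, but classically one uses the Harris--Kesten route via \bxp. Then I suppose for contradiction that $\theta(\tfrac12)>0$, so that $\PP_{1/2}$-almost surely there is an infinite primal open cluster and, by self-duality, also an infinite dual closed cluster. Burton--Keane gives uniqueness of each. The Zhang trick then applies to a large square $B_n$: by FKG and uniqueness, with probability close to $1$ the infinite primal cluster touches each of the four sides of $B_n$, and simultaneously by the identical distribution of the dual model so does the infinite dual cluster. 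Taking $n$ large enough that all eight of these events occur with probability greater than $\tfrac78$, a planarity argument forces two disjoint primal infinite clusters or two disjoint dual infinite clusters, contradicting Burton--Keane. For general planar lattices without exact self-duality (e.g.\ $\TT$ and $\HH$), the same scheme works once one knows the location of $\pc$ and a \bxp\ at criticality, which here are inputs from the star--triangle/\rc\ machinery developed later in the paper.

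For $d$ sufficiently large I would pass to the lace expansion. The goal is the triangle condition
\begin{equation*}
\nabla(\pc) := \sum_{x,y\in\ZZ^d} \PP_{\pc}(0\lra x)\,\PP_{\pc}(x\lra y)\,\PP_{\pc}(y\lra 0) < \oo,
\end{equation*}
which Hara and Slade establish for $d$ large by an expansion around the Gaussian/random-walk two-point function and a bootstrap argument. Once $\nabla(\pc)<\oo$, the Aizenman--Newman/Barsky--Aizenman differential inequalities for the magnetization $M(p,h)=1-\EE_p(e^{-h|C|})$ yield mean-field behaviour, in particular $\theta(\pc)=\lim_{h\downarrow 0}M(\pc,h)=0$. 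The combinatorial work of the lace expansion, and the bootstrap controlling the diagrammatic estimates at $p=\pc$, is the technical heart of this half of the plan.

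The main obstacle, and the reason this is listed as a conjecture rather than a theorem, is the intermediate regime $3\le d\le 10$ (roughly), where neither planar duality nor a convergent lace expansion is available. Any uniform strategy would presumably have to proceed via a differential-inequality approach directly: show that $\theta$ is continuous at $\pc$ by ruling out a jump, perhaps by establishing a version of the triangle condition or a suitable scaling relation in every dimension, or by a renormalization argument at criticality using finite-size scaling. I do not have a concrete proposal for this step; it is exactly the gap that the conjecture records. So my honest plan is: execute the planar proof in $d=2$, invoke the lace-expansion proof for $d$ large, and flag the intermediate dimensions as the outstanding problem requiring a genuinely new idea.
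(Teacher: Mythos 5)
You were asked about Conjecture~\ref{thetapc0}, and the paper offers no proof of it: it is stated explicitly as an open problem, accompanied only by the remark that it is known for certain lattices when $d=2$ and for large $d$, currently $d\ge 19$. Your proposal is correctly calibrated to this: you do not claim a proof, and the two regimes you outline are exactly those in which the statement is known. Your sketches are sound in outline. One point of precision in $d=2$: Zhang's duality/uniqueness argument yields $\theta(\tfrac12)=0$ on $\LL^2$ directly, and this combines with Kesten's bound $\pc\le\tfrac12$ (or with the $q=1$ case of the random-cluster result of Section~\ref{sec:rcm}) to give $\theta(\pc)=0$; one does not need to identify $\pc$ before running the duality trick. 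For $\TT$ and $\HH$, the route through the box-crossing property at criticality (Theorem~\ref{main_result}), giving closed dual circuits on all scales, is indeed how $\theta(\pc)=0$ is obtained there, though note that for a completely general planar lattice neither the location of the critical surface nor the critical box-crossing property is available, which is why even the $d=2$ case of the conjecture is only settled for \emph{certain} lattices. For large $d$, your account is the standard one: the Hara--Slade lace expansion gives the triangle condition for the nearest-neighbour model only in high dimension (hence the paper's $d\ge 19$; the $d>6$ threshold pertains to spread-out models), and the Aizenman--Newman/Barsky--Aizenman differential inequalities convert $\nabla(\pc)<\oo$ into $\theta(\pc)=0$. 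Your identification of the intermediate dimensions, most notably $d=3$, as the outstanding obstruction is precisely the paper's point: that gap is the content of the conjecture, not a defect of your proposal.
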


The claim of the conjecture is known to be valid for certain lattices when $d=2$
and for large $d$, currently $d \ge 19$.

Whereas the above process is defined in terms
of a single parameter $p$,
much of this section is directed at the multiparameter setting in which
an edge $e$ is designated open with some probability $p_e$.
In such a case, the critical probability $\pc$ is replaced by a so-called `critical surface'.
See Section~\ref{sec:univ} for a more precise discussion of this.

The theory of percolation is extensive and influential. Not only is percolation a
benchmark model for studying random spatial processes in general, but also it has been,
and continues to be, a source of beautiful open problems (of which Conjecture~\ref{thetapc0} is one).
Percolation in two dimensions has been especially prominent in the last decade by virtue
of its connections to conformal invariance and conformal field theory. Interested readers
are referred to the papers \cite{Cardy,Sch06,Smirnov, Smi07, Sun11, WW_park_city} and the books
\cite{BolRio, Grimmett_Percolation,Grimmett_Graphs}.

The concepts of critical exponent and scaling are discussed in Section
\ref{sec:pt}. Section~\ref{sec:bxp} is concerned with percolation in two dimensions,
and especially the \bxp. The \stt\ features in Section~\ref{sec:stt}, followed
by a discussion of universality in Section~\ref{sec:univ}.

\subsection{Power-law singularity}\label{sec:pt}

Macroscopic functions, such as the percolation probability and mean cluster-size,
$$
\theta(p)=\PP_p(|C|=\oo),\quad \chi(p)=\PP_p(|C|),
$$
have  singularities at $p=\pc$, and there is overwhelming theoretical and numerical evidence
that these are of `power-law' type. A great deal of effort has been directed towards
understanding the nature of the percolation
phase transition. The picture is now fairly clear for one specific model
in $2$ dimensions (site percolation on the triangular lattice),
owing to
the very significant progress in recent years linking critical percolation
to the Schramm--L\"owner curve SLE$_6$. There remain however
substantial difficulties to be overcome even when $d=2$,
associated largely with the extension of such results to general two-dimensional systems.
The case of large $d$ (currently, $d \ge 19$) is also
well understood, through work based on the so-called `lace expansion'.
Many problems remain open in the obvious case $d=3$.

The nature of the percolation singularity is
expected to be canonical, in that it shares general
features with phase transitions of other models of statistical mechanics.
These features are sometimes referred to as `scaling theory' and they
relate to the `critical exponents' occurring in the power-law
singularities (see \cite[Chap. 9]{Grimmett_Percolation}).
There are two sets of critical exponents, arising firstly in the limit as
$p\to\pc$, and secondly in the limit over increasing
spatial scales when $p=\pc$. The definitions of the critical exponents
are found  in Table~\ref{Tab-ce} (taken from \cite{Grimmett_Percolation}).

\begin{table}[t]
\centering
\tabcolsep=7.5pt
\caption{Eight functions and their critical exponents. The first five exponents arise
in the limit as $p \to \pc$, and the remaining three as $n\to\oo$ with $p=\pc$.
See \cite{Grimmett_Percolation} for a definition
 of the correlation length $\xi(p)$}
\label{Tab-ce}
\begin{tabular}{|c|c|c|c|}
\hline
\multicolumn{2}{|c|}{}&&\\
\multicolumn{2}{|c|}{\emph{Function}}  & \emph{Behaviour} & \emph{Exp.} \\
\multicolumn{2}{|c|}{}&&\\
\hline
&&&\\
\raisebox{1ex}{percolation}& $\th (p)=\PP_p(|C|=\infty )$ & $\th (p)\approx (p-\pc )^\beta$ & $\beta$ \\
\raisebox{.8ex}{probability}&&&\\
&&&\\
\raisebox{1ex}{truncated}& $\chi^{\text f}(p)=\PP_p(|C|1_{|C|<\infty})$& $\chi^{\text f}(p)\approx |p-\pc |^{-\gamma}$ & $\gamma$\\
\raisebox{.8ex}{mean cluster-size} &&& \\
&&&\\
\raisebox{1ex}{number of}& $\kappa (p)=\PP_p(|C|^{-1})$ & $\kappa'''(p)\approx |p-\pc |^{-1-\alpha}$ & $\alpha$\\
\raisebox{.8ex}{clusters per vertex}&&&\\
&&&\\
cluster moments& $\chi_k^{\text f} (p)=\PP_p(|C|^k1_{|C|<\infty} )$ & $\displaystyle\frac{\chi_{k+1}^{\text f}
     (p)}{\chi_k^{\text f}(p)}\approx |p-\pc |^{-\De}$ & $\De$\\
&&&\\
correlation length& $\xi (p)$ & $\xi (p)\approx |p-\pc |^{-\nu}$ & $\nu$\\
&&&\\
\hline
\multicolumn{2}{|c|}{}&&\\
\multicolumn{2}{|c|}{cluster volume}& $\PP_{\pc} (|C|=n)\approx n^{-1-1/\de}$ & $\de$\\
\multicolumn{2}{|c|}{}&&\\
\multicolumn{2}{|c|}{cluster radius} & $\PP_{\pc}\bigl(\rad (C)=n\bigr)\approx n^{-1-1/\rho}$ & $\rho$\\
\multicolumn{2}{|c|}{}&&\\
\multicolumn{2}{|c|}{connectivity function} & $\PP_{\pc}(0\lra x)\approx   \| x\|^{2-d-\eta}$ & $\eta$\\
\multicolumn{2}{|c|}{}&&\\
\hline
\end{tabular}
\end{table}

The notation of Table~\ref{Tab-ce} is as follows.
We write $f(x) \approx g(x)$ as $x \to x_0 \in[0,\oo]$ if
$\log f(x)/\log g(x) \to 1$.
 The \emph{radius} of the open cluster $C_x$ at the vertex $x$ is defined by
$$
\rad (C_x)
=\sup\{\|y\|: x \lra y\},
$$
where
$$
\|y\|= \sup_i |y_i|, \qquad y=(y_1,y_2,\dots,y_d)\in\RR^d,
$$
is the supremum ($L^\oo$) norm on $\RR^d$.
(The choice of norm is irrelevant since all norms are equivalent on $\RR^d$.)
The limit as $p\to\pc$ should be interpreted
in a manner appropriate for the function in question
(for example, as $p \downarrow \pc$ for $\th(p)$,
but as $p\to\pc$ for $\kappa(p)$).
The \emph{indicator function} of an event $A$ is denoted $1_A$.

Eight critical exponents are listed in Table~\ref{Tab-ce},
denoted $\alpha$, $\beta$, $\gamma$, $\de$, $\nu$, $\eta$, $\rho$, $\De$,
but there is no general proof of the
existence of any of these exponents for arbitrary $d\ge 2$.
Such critical exponents may be defined for phase
transitions in a large family of physical systems. However, it is
not believed that they are independent variables, but rather that,
for all such systems, they
satisfy the so-called {\it scaling relations\/}
\begin{gather*}
2-\alpha=\gamma +2\beta =\beta (\de +1),\\
\De=\de\beta ,\quad
\gamma=\nu (2-\eta ),
\end{gather*}
and, when $d$ is not too large, the {\it hyperscaling relations\/}\label{ind-hypers}
\begin{align*}
d\rho=\de +1,\quad
2-\alpha=d\nu .
\end{align*}
More generally, a `scaling relation' is any equation involving critical
exponents believed to be `universally' valid.
The {\it upper critical dimension\/} is the largest value $\dc$
such that the hyperscaling relations hold for $d\leq\dc$. It is believed
that $\dc =6$ for percolation.
There is no general proof of the validity of the scaling and hyperscaling
relations for percolation, although quite a lot is known when either $d=2$ or $d$ is large.
The case of large $d$ is studied via the lace expansion, and this
is expected to be valid for $d > 6$.

We note some further points in the context of percolation.
\begin{letlist}
\item
{\it Universality\/}. The numerical values of critical
exponents are believed to depend only on the value of $d$, and to be independent of the
choice of lattice, and whether bond or site. Universality in two dimensions
is discussed further in Section~\ref{sec:univ}.

\item
{\it Two dimensions\/}. When $d=2$, it is believed that
$$
\a =-\tfrac23,\ \b =\tfrac5{36},\ \gamma =\tfrac{43}{18},\ \de =\tfrac{91}5
,\ldots
$$
These values (other than $\a$) have been proved  (essentially only) in the special case of site
percolation on the triangular lattice, see \cite{Smirnov-Werner}.

\item
{\it Large dimensions\/}. When $d$ is sufficiently large (in fact,
$d\geq\dc$) it is believed that the critical exponents are the same as
those for percolation on a tree (the `mean-field model'), namely $\de =2$,
$\gamma = 1$, $\nu=\frac12$,
$\rho =\frac12$, and so on (the other exponents are
found to satisfy the scaling relations). Using the
first hyperscaling relation, this is consistent with the contention that $\dc =6$.
Several such statements are known to hold for $d\ge 19$, see \cite{HS, HS94, KN}.
\end{letlist}

Open challenges include the following:
\begin{numlist}
\item[--] prove the existence of critical exponents for general lattices,
\item[--] prove some version of universality,
\item[--] prove the scaling and hyperscaling relations in general dimensions,
\item[--] calculate the critical exponents for general models in two dimensions,
\item[--] prove the mean-field values of critical exponents when $d \ge 6$.
\end{numlist}
Progress towards these goals has been substantial. As noted above,
for sufficiently large $d$, the lace expansion has enabled proofs
of exact values for many exponents. There has been remarkable progress
in recent years when $d=2$, inspired
largely by work of Schramm \cite{Sch00}, enacted by Smirnov \cite{Smirnov},
and confirmed by the programme pursued by Lawler, Schramm, Werner,
Camia, Newman and others to understand SLE curves and conformal ensembles.

Only \emph{two-dimensional} lattices are considered in the remainder of Section~\ref{sec:bp}.

\subsection{Box-crossing property}\label{sec:bxp}

Loosely speaking, the  `\bxp' is the property that the probability of
an open crossing of a box with given aspect-ratio is bounded away from 0, uniformly in the
position, orientation, and size of the box.

Let $\sL=(V,E)$ be a planar lattice drawn in $\RR^2$, and let $\PP$
be a probability measure on $\Om =\{0,1\}^E$. For definiteness, we may think of $\sL$ as one of the square,
triangular, and hexagonal lattices, but the following discussion is valid in much greater generality.

Let $R$ be a (non-square) rectangle of $\RR^2$.
A lattice-path $\pi$ is said to \emph{cross} $R$
if $\pi$ contains an arc (termed a \emph{box-crossing}) lying in the
interior of $R$ except for its two endpoints, which are required
to lie, respectively, on the two shorter sides of $R$.
Note that a box-crossing of a rectangle lies in the longer direction.

Let $\om \in \Om$.
The rectangle $R$ is said to \emph{possess an open crossing}
if there exists an open box-crossing of
$R$, and we write $C(R)$  for this event.
Let $\sT$ be the set of translations of $\RR^2$, and $\tau\in\sT$.
Fix the aspect-ratio $\rho>1$.
Let $H_n=[0,\rho n]\times[0,n]$ and $V_n=[0,n]\times[0,\rho n]$, and
let $n_0=n_0(\sL)<\oo$ be minimal with the property that,
for all $\tau\in\sT$ and all $n \ge n_0$, $\tau H_n$ and $\tau V_n $
possess crossings in $\sL$.
Let
\begin{align}
  \b_\rho(\sL,\PP) &= \inf\Bigl\{ \PP(C(\tau H_n)),\PP(C( \tau V_n)): n \ge n_0,\ \tau\in\sT\Bigr\}.
\label{G13}
\end{align}
The pair $(\sL, \PP)$  is said to have  the $\rho$-\emph{\bxp}
if  $\b_\rho(\sL,\PP)>0$.

The measure $\PP$ is called \emph{positively associated} if,
for all increasing cylinder events $A$, $B$,
\begin{equation}\label{pos-ass}
\PP(A \cap B) \ge \PP(A) \PP(B).
\end{equation}
(See \cite[Sect. 4.2]{Grimmett_Graphs}.)
The value of $\rho$ in the \bxp\ is in fact
immaterial, so long as  $\rho > 1$ and $\PP$ is positively associated.
We state this explicitly as a proposition since we shall need it in Section~\ref{sec:rcm}.
The proof is left as an exercise (see \cite[Sect. 5.5]{Grimmett_Graphs}).

\begin{prop}\label{prop:rsw}
Let $\PP$ be a probability measure on $\Om$ that is positively associated.
If there exists $\rho>1$ such that $(\sL,\PP)$ has the $\rho$-\bxp,
then $(\sL,\PP)$ has the $\rho$-\bxp\  for all $\rho>1$.
\end{prop}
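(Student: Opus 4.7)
The plan is to treat the two ranges $\rho' \le \rho$ and $\rho' > \rho$ separately; the first is an elementary containment argument, and the second is a single RSW-style FKG gluing step that we iterate.

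For $\rho' \le \rho$, I would argue by inclusion. A horizontal box-crossing of $\tau H_n^{(\rho)} = \tau\bigl([0,\rho n]\times[0,n]\bigr)$ is a path from the left side to the right side of that rectangle. Truncating such a path at its first visit to the line $x = \rho' n$ (in the natural coordinate frame of the translate) yields a horizontal box-crossing of a translate of $H_n^{(\rho')}$. Since the collection of translates $\{\tau H_n^{(\rho')}\}$ and $\{\tau H_n^{(\rho)}\}$ coincide after re-indexing, and since the same reasoning applies to $V_n^{(\rho')}\subset V_n^{(\rho)}$, we obtain $\b_{\rho'}(\sL,\PP) \ge \b_{\rho}(\sL,\PP) > 0$. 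In particular, the $1$-box-crossing property (crossings of $n\times n$ squares) is free.

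For $\rho' > \rho$, the key step is to improve the aspect ratio from $\rho$ to $2\rho-1$; since $\rho>1$ forces $2\rho-1>\rho$, iterating this step reaches any $\rho''>\rho'$, after which the easy direction yields the $\rho'$-box-crossing property. To carry out the improvement, set
\begin{equation*}
R_1 = [0,\rho n]\times[0,n], \qq R_2 = [(\rho-1)n,(2\rho-1)n]\times[0,n],
\end{equation*}
and let $S = R_1 \cap R_2 = [(\rho-1)n,\rho n]\times[0,n]$, which is an $n\times n$ square. Let $A_i$ be the event that $R_i$ has a horizontal open crossing ($i=1,2$), and $B$ the event that $S$ has a vertical open crossing. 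On $A_1 \cap A_2 \cap B$, the horizontal crossing of $R_1$ reaches the right edge of $S$, the horizontal crossing of $R_2$ departs from the left edge of $S$ and exits through its right edge, and the vertical crossing of $S$ meets both by a planar topological argument (two open paths joining opposite sides of a square must intersect). Concatenating produces a horizontal open crossing of $[0,(2\rho-1)n]\times[0,n]$. The three events are each increasing, so positive association gives
\begin{equation*}
\PP(A_1 \cap A_2 \cap B) \ge \PP(A_1)\PP(A_2)\PP(B) \ge \b_\rho(\sL,\PP)^3 > 0,
\end{equation*}
where $\PP(B) \ge \b_\rho$ comes from the easy direction applied to the vertical rectangles $V_n^{(\rho)}$. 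The argument transfers uniformly to all translates and rotations by $\pi/2$ of the configuration, yielding the $(2\rho-1)$-box-crossing property.

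The main obstacle, and the only step requiring care, is the planar topology assertion that the horizontal crossings of $R_1$ and $R_2$ necessarily meet the vertical crossing of $S$. One must verify that the portion of the $R_2$-crossing between its starting point on the left edge of $S$ and its first exit through the right edge of $S$ is a genuine left-right crossing of $S$ (not merely a touching), so that the Jordan-style crossing lemma applies on the lattice. Once this is pinned down, the rest is a mechanical application of FKG, the inclusion step, and iteration; no estimate on the specific value of $\rho$ enters, only the inequality $\rho>1$ which is needed to make the aspect ratio strictly increase under one gluing.
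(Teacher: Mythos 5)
The paper gives no proof of this proposition (it is left as an exercise, with a pointer to \cite[Sect.\ 5.5]{Grimmett_Graphs}), and your argument is precisely the standard one intended there: containment for aspect ratios $\rho'\le\rho$, and an FKG gluing of two overlapping $\rho$-rectangles through their common square (whose vertical crossing probability is again at least $\b_\rho$ by truncating a $V_n$-crossing) to pass from $\rho$ to $2\rho-1$, iterated; the topological intersection of a horizontal and a vertical crossing of the square holds because $\sL$ is planar, so the two open lattice paths must share a vertex. The proof is correct; the only wording to adjust is the appeal to ``rotations by $\pi/2$ of the configuration'', since neither $\sL$ nor $\PP$ is assumed rotation-invariant --- instead, obtain the bound for the vertical rectangles $V_n$ by repeating the identical gluing argument with the roles of $H_n$ and $V_n$ exchanged.
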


It is standard that the percolation measure (and more generally
the \rc\ measure of Section~\ref{sec:rcm},
see \cite[Sect. 3.2]{Grimmett_RCM})
are positively associated, and thus we may speak simply of the \bxp.

Here is a reminder about duality for planar graphs.
Let $G=(V,E)$ be a planar graph, drawn in the plane. Loosely speaking, the {\it planar
dual\/}  $G_\td$ of $G$ is the graph constructed by placing a
vertex inside every face of $G$ (including the infinite face if it exists)
and joining two such vertices by an edge $e_\td$ if and only if the
corresponding faces of $G$ share a boundary edge $e$.
The edge-set $E_\td$ of $G_\td$ is in one--one correspondence ($e \lra e_\td$) with $E$.
The duals of the square, triangular, and hexagonal lattices are
illustrated in Figure~\ref{fig:lattices}.

Let $\Om=\{0,1\}^E$, and $\om \in \Om$. With $\om$ we associate a configuration $\om_\td$ in
the dual space $\Om_\td = \{0,1\}^{E_\td}$ by $\om(e) + \om_\td(e_\td) = 1$.
Thus, an edge of the dual is open if and only if it crosses a closed edge of the primal graph.
The measure $\PP_p$ on $\Om$ induces the measure $\PP_{1-p}$ on $\Om_\td$.

The \bxp\ is fundamental to rigorous study of percolation in two dimensions. When it holds,
the process is either critical or supercritical.
If both $(\sL,\PP_p)$ and its dual $(\sL_\td, \PP_{1-p})$ have the \bxp,
then each is critical (see, for example, \cite[Props 4.1, 4.2]{GM1}).
The \bxp\ was developed by Russo \cite{Russo},
and Seymour and Welsh \cite{Seymour-Welsh},
and exploited heavily by Kesten \cite{Kesten_book}.
Further details of the use of the \bxp\ may be found in
\cite{CN06,Smirnov,Sun11,WW_park_city}.

One way of estimating the chance of a box-crossing is via its derivative. Let $A$ be an increasing
cylinder event,
and let $g(p) = \PP_p(A)$. An edge $e$ is called \emph{pivotal} for $A$ (in a configuration $\om$) if
$\om^e \in A$ and $\om_e \notin A$, where $\om^e$ (\resp, $\om_e$) is the configuration
$\om$ with the state of $e$ set to $1$ (\resp, $0$).
The so-called `Russo formula' provides a geometric representation for the derivative $g'(p)$:
$$
g'(p) = \sum_{e\in E}  \PP_p(e \text{ is pivotal for }A).
$$
With $A$ the event that the rectangle $R$ possesses an open-crossing, the edge $e$ is pivotal for $A$ if
the picture of Figure~\ref{fig:piv} holds.
Note the four `arms' centred at $e$, alternating primal/dual.

\begin{figure}[!b]
 \centering
    \includegraphics[width=0.5\textwidth]{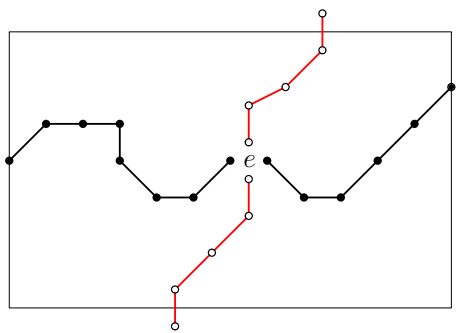}
  \caption{Primal and dual paths in the rectangle $R$.
A black path is an open primal path joining an endpoint
of $e$ to a left/right side of $R$, and a red path is an open dual path
from an endpoint of $e_\td$ to the top/bottom side of $R$.
An edge $e$ is pivotal for the box-crossing event if and only if
there are four arms of alternating type from $e$ to the boundary of the box.}
  \label{fig:piv}
\vspace*{3pt}
\end{figure}

It turns out that the nature of the percolation singularity is
partly determined by the asymptotic behaviour of the probability of such a `four-arm event'
at the critical point. This event has an associated critical exponent which we introduce next.

Let $\La_n$ be the set of vertices within graph-theoretic
distance $n$ of the origin  $0$, with boundary $\pd\La_n = \La_n \sm \La_{n-1}$.
Let $\Ann(N,n) = \La_n \setminus \La_{N-1}$ be the \emph{annulus} centred at $0$.
We  call $\pd \La_n$ (\resp, $\pd \La_N$) its \emph{exterior} (\resp, \emph{interior}) \emph{boundary}.

Let $k \in \NN$, and let
$\si=(\si_1,\si_2,\dots,\si_k) \in\{0,1\}^k$; we call $\si$ a \emph{colour sequence}. The sequence $\si$ is called \emph{monochromatic}
if either $\si=(0,0,\dots,0)$ or $\si=(1,1,\dots,1)$, and \emph{bichromatic}
otherwise. If $k$ is even, $\si$ is called \emph{alternating} if either $\si=(0,1,0,1,\dots)$
or $\si=(1,0,1,0,\dots)$. An open path of the primal (\resp, dual) lattice is said to have colour 1 (\resp,
0).
For $0<N<n$,
the arm event $A_{\si}(N,n)$
is the event that the inner boundary of
$\sA(N,n)$ is connected to the outer boundary by $k$ vertex-disjoint
paths
with colours $\sigma_1, \ldots, \sigma_k$, taken in anticlockwise order.

The choice of $N$ is largely immaterial to the asymptotics as $n\to\oo$, and
it is enough to take $N=N(\si)$ sufficiently
large that, for $n \ge N$, there exists a configuration with the required $j$ coloured paths.
It is believed that there exist \emph{arm exponents} $\rho(\si)$ satisfying
$$
\PP_{\pc}[A_{\sigma}(N,n)] \approx n^{-\rho(\si)} \qq \text{as } n \to\oo.
$$
Of particular interest here are the alternating arm exponents. Let $j \in \NN$, and
write $\rho_{2j} = \rho(\si)$ with $\si$ the alternating colour sequence of
length $2j$. Thus, $\rho_4$ is the exponent associated with the derivative
of box-crossing probabilities. Note that the radial exponent  $\rho$ satisfies
$\rho=1/\rho(\{1\})$.

\subsection{Star--triangle transformation}\label{sec:stt}

In its base form, the \stt\  is
a simple graph-theoretic relation.
Its principal use has been to explore models with characteristics
that are invariant under such transformations.
It  was discovered in the context of electrical networks
by Kennelly \cite{Ken} in 1899, and
it was adapted in 1944 by Onsager \cite{OnsI} to the Ising model in
conjunction with Kramers--Wannier duality.
It is a key element in the work
of Baxter \cite{Baxter_book} on exactly solvable models in statistical mechanics, and it has
become known as the \emph{Yang--Baxter equation} (see \cite{Perk-AY}
for a history of its importance in physics).
Sykes and Essam \cite{Sykes_Essam} used the \stt\
to predict the critical surfaces of inhomogeneous bond percolation on
triangular and hexagonal lattices, and it is a tool in the study of
the \rc\ model \cite{Grimmett_RCM}, and the dimer model \cite{Ken02}.

Its importance for probability stems from the fact that a variety of probabilistic models are
conserved under this transformation, including critical percolation, Potts, and random-cluster
models. More specifically,
the \stt\  provides couplings of critical probability measures under which certain geometrical
properties of configurations (such as connectivity in percolation)
are conserved.

\begin{figure}[t]
 \centering
    \includegraphics[width=0.6\textwidth]{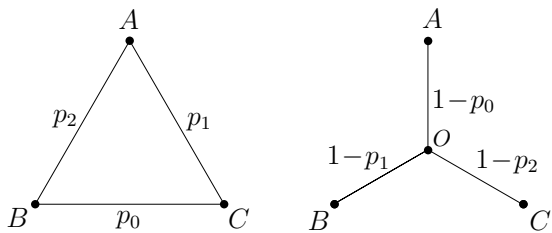}
  \caption{The star--triangle transformation}
  \label{fig:star_triangle_transformation}
\end{figure}

We summarize the \stt\ for percolation as in \cite[Sect.\ 11.9]{Grimmett_Percolation}.
Consider the triangle $G=(V,E)$
and the star $G'=(V',E')$ drawn in Figure~\ref{fig:star_triangle_transformation}. Let $\bp=(p_0,p_1,p_2)\in[0,1)^3$.
Write $\Om=\{0,1\}^E$ with associated (inhomogeneous)
product probability measure $\PP_\bp^\tri$
with intensities $(p_i)$ as illustrated,
and $\Om'=\{0,1\}^{E'}$ with associated measure $\PP_{1-\bp}^\hex$.
Let $\omega\in\Om$ and
$\om'\in\Om'$. The configuration $\om $ (\resp, $\om'$) induces a connectivity relation on the set $\{A,B,C\}$ within
$G$ (\resp, $G'$). It turns out that these two connectivity relations are equi-distributed so long
as $\kappa_\tri(\bp)=0$, where
\begin{equation}\label{kappadef}
\kappa_\tri(\bp) = p_0+p_1+p_2 - p_1p_2p_3 -1.
\end{equation}

This may be stated rigorously as follows.
Let $1(x \xleftrightarrow{G,\om} y)$ denote the indicator
function of the event that $x$ and $y$ are connected in $G$ by an open path of $\om$.
Thus, connections in $G$ are described by the family
$\{1(x \xleftrightarrow{G , \omega} y): x,y \in V\}$ of random variables, and similarly for $G'$.
It may be checked (or see \cite[Sect.\ 11.9]{Grimmett_Percolation}) that
the families
$$
\left\{1( x \xleftrightarrow{G, \omega} y) : x,y = A,B,C\right\}, \quad
\left\{1(x \xleftrightarrow{G', \om'} y) :x,y = A,B,C\right\},
$$
have the same law whenever $\kappa_\tri(\bp)=0$.

It is helpful to express this in terms of a coupling of $\PP_\bp^\tri$ and $\PP_{1-\bp}^\hex$.
Suppose $\bp\in[0,1)^3$ satisfies $\kappa_\tri(\bp)=0$, and let $\Om$ (\resp, $\Om'$)
have associated measure $\PP_\bp^\tri$ (\resp, $\PP_{1-\bp}^\hex$)
as above. There exist random mappings $T:\Om \to \Om'$ and
$S: \Om'\to\Om$ such that:
\begin{letlist}
\item
  $T(\omega)$ has the same law as $\om'$, namely $\PP_{1-\bp}^\hex$,
\item $S(\om')$ has the same law as $\omega$, namely $\PP_\bp^\tri$,
\item for  $x,y \in \{ A,B,C \}$, $1\bigl(x \xleftrightarrow{G,\omega} y\bigr) =
1\bigl(x \xleftrightarrow{G',T(\omega)} y\bigr)$,
\item for  $x,y \in \{ A,B,C \}$, $1\bigl(x \xleftrightarrow{G',\omega'} y\bigr) =
1\bigl(x \xleftrightarrow{G,S(\omega')} y\bigr)$.
\end{letlist}
Such mappings are
described informally in Figure~\ref{fig:coupling}
(taken from \cite{GM1}).

\begin{figure}[t]
  \begin{center}
    \includegraphics[width=1.0\textwidth]{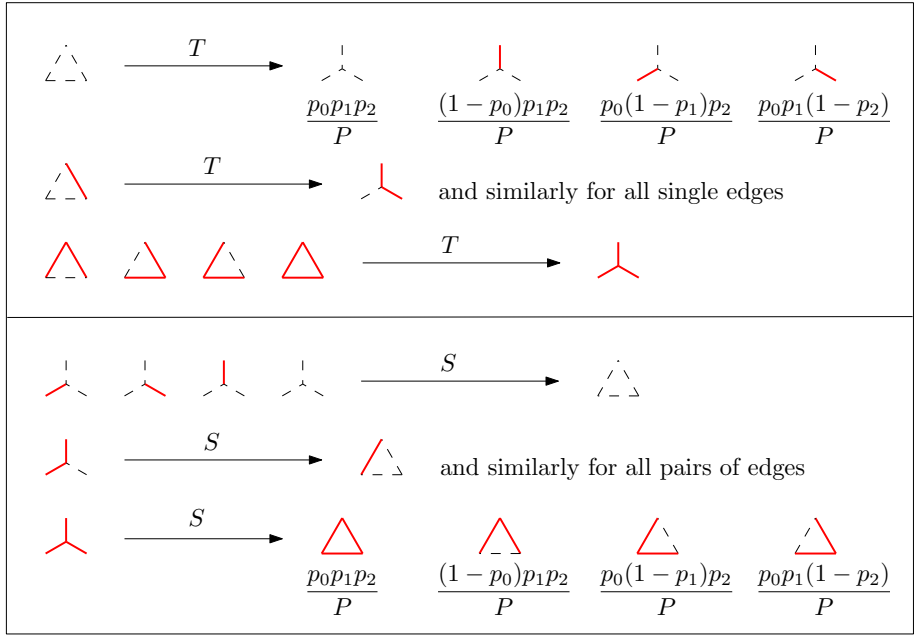}
  \end{center}
  \caption{The random maps $T$ and $S$ and their
transition probabilities, with $P:=(1-p_0)\times$ $(1-p_1)(1-p_2)$.
Note that $T(\om)$ is
deterministic for seven of the eight elements of $\Om$; only in the eighth case does $T(\om)$ involve
further randomness. Similarly, $S(\om')$ is deterministic except for one special $\om'$.}
  \label{fig:coupling}
\end{figure}

The \stt\ may evidently be used to couple bond percolation on the
triangular and hexagonal lattices. This may be done, for example, by
applying it to every upwards pointing triangle
of $\TT$. Its impact however extends much beyond this.
Whenever two percolation models are related by sequences of disjoint \stt s, their open connections
are also related (see \cite{GM3}).
That is, the \stt\ transports not only measures but also
open connections. We shall see how this may be used in the next section.

\subsection{Universality for bond percolation}\label{sec:univ}

The hypothesis of universality states in the context of percolation that the
nature of the singularity depends on the number of dimensions but not
further on the details of the model (such as choice of lattice, and whether bond or site).
In this section, we summarize results of \cite{GM1,GM2} showing a degree of universality
for a class of bond percolation models in two dimensions. The basic idea is as follows.
The \stt\ is a relation between a large family of critical bond percolation models.
Since it preserves open connections, these models have singularities of the same type.

We concentrate here on the square, triangular,
and hexagonal (or honeycomb) lattices, denoted respectively
as $\LL^2$, $\TT$, and $\HH$. The following analysis applies to a large class
of so-called isoradial graphs of which these lattices are examples (see \cite{GM3}).
The critical probabilities of homogeneous percolation on these lattices
are known as follows (see \cite{Grimmett_Percolation}):
$\pc(\LL^2) = \frac12$, and $\pc(\TT)=1-\pc(\HH)$ is the
root in the interval
$(0,1)$ of the cubic equation $3p-p^3-1=0$.

We define next \emph{inhomogeneous} percolation on these lattices.
The edges of the square lattice are partitioned into two classes (horizontal and vertical) of parallel edges,
while those of the triangular and hexagonal lattices may be split into three such classes.
The product measure on the edge-configurations is permitted to
have different intensities on different edges, while requiring that any two parallel edges have the same intensity.
Thus, inhomogeneous percolation on the square lattice has two parameters,
$p_0$ for horizontal edges and $p_1$ for vertical edges, and
we denote the corresponding measure $\PP_{\bp}^\squ$ where $\bp=(p_0,p_1)\in[0,1)^2$.
On the triangular and hexagonal lattices, the measure is defined by a triplet of parameters $\bp=(p_0, p_1, p_2)\in [0,1)^3$,
and we denote these measures $\PP_{\bp}^\tri$ and $\PP_{\bp}^\hex$, respectively.

Criticality is identified in an inhomogeneous model
via a `critical surface'. Consider bond percolation on
a lattice $\sL$ with edge-probabilities $\bp=(p_0,p_1,\dots)$.
The \emph{critical surface} is an equation of the form $\kappa(\bp)=0$,
where the percolation probability $\th(\bp)$ satisfies
$$
\theta(\bp) \begin{cases} = 0 &\text{if } \kappa(\bp)<0,\\
>0 &\text{if } \kappa(\bp)>0.
\end{cases}
$$
The discussion of Section~\ref{sec:pt} may be adapted to the critical
surface of an inhomogeneous model.

The critical surfaces of the above models are
given explicitly in \cite{Grimmett_Percolation, Kesten_book}.
Let
\begin{alignat*}{2}
\kappa_\squ(\bp) &= p_0+p_1-1, &&\quad \bp=(p_0,p_1), \\
\kappa_\tri(\bp) &= p_0+p_1+p_2 - p_0 p_1 p_2 - 1,&&\quad \bp=(p_0,p_1,p_2),\\
\kappa_\hex(\bp) &= -\kappa_\tri(1-p_0,1-p_1,1-p_2),&&\quad \bp=(p_0,p_1,p_2).
\end{alignat*}
The critical surface of the lattice $\LL^2$
(\resp, $\TT$, $\HH$)  is
given by $\kappa_\squ(\bp) = 0$ (\resp, $\kappa_\tri(\bp)=0$, $\kappa_\hex(\bp)=0$).

Let $\sM$ denote the set of all inhomogeneous bond percolation models
on the square, triangular, and hexagonal lattices,
with edge-parameters belonging to the half-open interval $[0,1)$
and lying in the appropriate critical surface.
A critical exponent $\pi$ is said to \emph{exist} for a model $M\in \sM$ if the
appropriate asymptotic relation of Table~\ref{Tab-ce} holds, and $\pi$ is called
\emph{$\sM$-invariant} if it exists for all $M \in \sM$ and its value
is independent of the choice of such $M$.

\begin{thm}[\cite{GM2}] \label{thm:eq}
\mbox{}
\begin{letlist}
\item For every $\pi \in \{\rho\}\cup \{\rho_{2j}: j \ge 1\}$, if $\pi$ exists for some model $M \in \sM$, then
it is $\sM$-invariant.
\item If either $\rho$ or $\eta$ exist for some $M\in\sM$, then $\rho$, $\delta$, $\eta$ are $\sM$-invariant
and  satisfy the scaling relations $2\rho=\de+1$, $\eta\rho=2$.
\end{letlist}
\end{thm}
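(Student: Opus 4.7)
The plan is to use the \stt\ of Section~\ref{sec:stt} as the central tool, exploiting the fact that it transports not only critical measures but also open connections. Since every model $M \in \sM$ sits on a critical surface $\kappa(\bp) = 0$, and since the star--triangle identity requires exactly $\kappa_\tri(\bp)=0$, the class $\sM$ is precisely adapted to iterative use of this transformation. The objective is a quantitative comparison between arm-event probabilities in different models: for any $M_1, M_2 \in \sM$ and any alternating colour sequence $\si$ of length $2j$, I would aim to prove that there exist constants $c,C, N_0 > 0$ such that
\[
c\, \PP_{\pc}^{M_2}[A_\si(N_0,n)] \le \PP_{\pc}^{M_1}[A_\si(N_0,n)] \le C\, \PP_{\pc}^{M_2}[A_\si(N_0,n)], \qq n \ge N_0,
\]
with the analogous statement for the one-arm radial event. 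Part (a) then follows on taking logarithms, dividing by $\log n$, and letting $n \to \oo$.

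To prove the comparison I would construct a mixed lattice in which a central ball of radius $\asymp n$ is of type $M_2$, its exterior is of type $M_1$, and the two are separated by an annular interface of \emph{bounded} width built from a finite sequence of disjoint star--triangle moves. Each individual move is measure-preserving and preserves connectivity between the three boundary vertices of the star/triangle, so composing the moves yields a coupling under which every arm configuration on the inner side corresponds to one on the outer side. The $2j$-arm event on $M_1$ in $\Ann(N_0,n)$ is thereby matched to the $2j$-arm event on $M_2$ in an annulus of the same order of magnitude, with only a constant-factor distortion coming from what happens within the fixed-width interface. Here the \bxp\ of Section~\ref{sec:bxp} and positive association are used to glue partial arms across the interface, ensuring the $2j$ disjoint arms do not coalesce as they pass through.

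For part (b), once $\sM$-invariance of $\rho$ is established, the scaling relation $2\rho = \de+1$ follows by standard radial arguments: $\PP_{\pc}(\rad(C) \ge n)$ decays like the one-arm probability $\approx n^{-1/\rho}$, and the usual moment identities linking the cluster-radius tail to the cluster-size tail deliver $\de$ through $1/\rho = (\de-1)/2$, i.e.\ $2\rho = \de+1$. The relation $\eta \rho = 2$ follows from representing the two-point function $\PP_{\pc}(0 \lra x)$ via two one-arm events anchored at $0$ and at $x$, meeting in the bulk, and applying the arm-exponent invariance together with positive association and the BK-type decoupling arguments standard in the planar setting.

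The main obstacle, and what I expect to absorb most of the technical work, is the construction of the interface joining two arbitrary models of $\sM$. The star--triangle identity relates only special triples $\bp$ with $\kappa_\tri(\bp)=0$, so to connect, say, a generic triangular model to a generic square-lattice model one must assemble a sequence of intermediate models (for example by viewing the square lattice as a degenerate limit of an isoradial family as in \cite{GM3}) and verify that star--triangle moves can be performed along the interface without creating long-range distortion. Controlling this deformation uniformly in $n$, and ensuring that the $2j$ disjoint arms survive transportation through the interface, is the delicate combinatorial core of the argument; once it is in place, parts (a) and (b) both follow from the quantitative comparison above.
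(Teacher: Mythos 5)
There is a genuine gap at the heart of your part (a): the ``annular interface of bounded width with only constant-factor distortion'' cannot be built. The \stt\ couplings between the square and triangular models only operate across the specific straight-line interface geometry of Figure~\ref{fig:lattice_trans} (upward triangles abutting a horizontal row of squares), swept up or down one row at a time by $\Sd\comp\Tu$ and $\Su\comp\Td$; there is no way to enclose a ball of one lattice inside an annulus of the other by finitely many such moves. More importantly, even with the correct strip geometry, converting a region of linear size $n$ from one lattice to the other requires of order $n$ sweeps, and each sweep can displace a path (and its endpoints) by a bounded amount, so the cumulative displacement is of order $n$ --- exactly as in the box-crossing argument of Section~\ref{sec:univ}, where a crossing of $R_N$ ends up within distance $N$ of its image. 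Hence an arm event in $\Ann(N_0,n)$ is not transported to an arm event in a comparable annulus up to constants; both scales of the annulus get smeared by order $n$, and your claimed two-sided comparison of arm probabilities does not follow. The paper's route (following \cite{GM2}) circumvents precisely this: one works on the hybrid strip lattices $\LL^m$ of Figure~\ref{fig:lattice_strip} and replaces the radial arm event by the event $\{0\lra J_n\}$, where both $0$ and the segment $J_n=[2n,3n]\times\{0\}$ lie on the line left invariant by the transformations, so that $\PP^\squ_{(p_0,1-p_0)}(0\lra J_n)=\PP^\tri_\bp(0\lra J_n)$ holds \emph{exactly}; the radial probabilities $\PP(A_{2n})$, $\PP(A_{3n})$ are then sandwiched against $\PP(0\lra J_n)$ up to constants using the \bxp\ (Theorem~\ref{main_result}) and positive association, as in \eqref{g10}--\eqref{g11}. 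The alternating-arm case requires further work of the same flavour (events adapted to the invariant geometry), not a gluing of arms through a fixed-width interface.

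Your part (b) is closer in spirit to what is actually done, but as written it re-derives folklore rather than invoking the key input: in the paper, $2\rho=\de+1$ and $\eta\rho=2$ are obtained by applying Kesten's scaling relations at criticality \cite{Kes87a}, whose hypotheses are supplied by the \bxp\ of Theorem~\ref{main_result}; your ``standard moment identities'' and ``BK-type decoupling'' sketch would, if carried out, essentially have to reproduce that work, and it does not stand on its own without the box-crossing input being made explicit.
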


Kesten \cite{Kesten87} showed\footnote{See also \cite{Nolin}.}
that the `near-critical' exponents $\beta$, $\gamma$, $\nu$, $\De$ may be
given explicitly in terms of $\rho$ and $\rho_4$, for two-dimensional models
satisfying certain symmetries. \emph{Homogeneous} percolation on our three lattices
have these symmetries, but it is not known whether the strictly \emph{inhomogeneous}
models have sufficient regularity for the conclusions to apply.
The next theorem is a corollary of Theorem~\ref{thm:eq} in the light of the results
of \cite{Kesten87,Nolin}.

\begin{thm}[\cite{GM2}]\label{thm:eq2}
Assume that $\rho$ and $\rho_4$ exist for some $M \in \sM$.
Then $\beta$, $\gamma$, $\nu$, and $\De$ exist
for homogeneous percolation on the square, triangular and hexagonal lattices, and they are invariant
across these three models. Furthermore, they satisfy the scaling relations
$\g+2\beta =\beta(\de+1)=2\nu$, $\De=\beta\de$.
\end{thm}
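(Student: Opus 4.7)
The plan is to combine Theorem~\ref{thm:eq} with the scaling-relation machinery of \cite{Kesten87}, as clarified in \cite{Nolin}, which expresses the near-critical exponents $\b$, $\g$, $\nu$, $\De$ explicitly in terms of the one-arm exponent $\rho$ and the polychromatic four-arm exponent $\rho_4$ for any two-dimensional bond-percolation model enjoying enough symmetry (translation invariance together with axis/diagonal reflections).

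First I would apply Theorem~\ref{thm:eq}(a) to the exponents $\rho$ and $\rho_4$: their assumed existence for some $M\in\sM$ upgrades at once to $\sM$-invariance, so in particular they exist with a common value on homogeneous bond percolation on each of $\LL^2$, $\TT$, $\HH$, all three of which belong to $\sM$. Part (b) of the same theorem then provides the existence and $\sM$-invariance of $\de$, together with the scaling relation $2\rho=\de+1$.

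Next I would restrict attention to the three \emph{homogeneous} critical models. Each of them has the reflection and rotational symmetries required by \cite{Kesten87}, so that theory applies and yields, for each such model, the existence of $\b$, $\g$, $\nu$, $\De$ as explicit universal functions of $\rho$ and $\rho_4$. Feeding in the $\sM$-invariant values of these two arm exponents obtained in the previous step, one reads off that $\b$, $\g$, $\nu$, $\De$ take identical values across $\LL^2$, $\TT$, $\HH$. The scaling relations $\g+2\b=\b(\de+1)=2\nu$ and $\De=\b\de$ then follow as algebraic consequences of Kesten's formulas combined with $\de=2\rho-1$.

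The main obstacle is conceptual rather than technical: Kesten's derivation of the near-critical exponents from the arm exponents relies on symmetry hypotheses available only for homogeneous models, which is precisely why Theorem~\ref{thm:eq2} is confined to the three homogeneous lattices even though $\rho$ and $\rho_4$ are $\sM$-invariant throughout. For homogeneous square, triangular, and hexagonal percolation these symmetries are classical, so once Theorem~\ref{thm:eq} is in hand the result reduces to citation of \cite{Kesten87, Nolin} and substitution.
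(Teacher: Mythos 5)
Your proposal follows exactly the paper's route: the paper states that Theorem~\ref{thm:eq2} is a corollary of Theorem~\ref{thm:eq} combined with the results of \cite{Kesten87,Nolin}, which express $\beta$, $\gamma$, $\nu$, $\De$ in terms of $\rho$ and $\rho_4$ for two-dimensional models with suitable symmetries, available only for the homogeneous lattices --- precisely the restriction you identify. The only quibble is terminological: $\rho_4$ here is the \emph{alternating} four-arm exponent rather than a general polychromatic one, but this does not affect the argument.
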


A key intermediate step in the proof of Theorem~\ref{thm:eq} is the \bxp\ for
inhomogeneous percolation on these lattices.

\begin{thm}[\cite{GM1}] \label{main_result}\mbox{}
\begin{letlist}
\item If $\bp \in (0,1)^2$ satisfies $\kappa_\squ(\bp)=0$,
  then $\PP_{\bp}^\squ$ has the \bxp.
\item If $\bp \in [0,1)^3$ satisfies $\kappa_\tri(\bp)=0$,
  then both $\PP_{\bp}^\tri$ and $\PP_{1-\bp}^\hex$
have the \bxp.
\end{letlist}
\end{thm}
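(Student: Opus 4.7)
The unifying device is the \stt, which preserves the connectivity among the three boundary vertices of the replaced substructure and therefore transports box-crossings between critical models (Section~\ref{sec:stt}). I would first prove part~(b) by transport from a reference model, and then deduce part~(a) from part~(b) by approximation as $p_2\downarrow 0$.

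For part~(b), I would begin with the homogeneous critical triangular model at $p_0=p_1=p_2=\pc(\TT)$, where the \bxp\ is classical (Russo--Seymour--Welsh together with the matching $\TT$--$\HH$ duality and positive association). Given any $\bp$ with $\kappa_\tri(\bp)=0$, I would construct a sequence of disjoint \stt s, applied in successive strips across a large reference region, that interpolates between the homogeneous reference parameters and $\bp$ along a path in the critical surface $\{\kappa_\tri=0\}$. Because each move preserves connectivity among its three boundary sites via the couplings $T$, $S$ of Section~\ref{sec:stt}, an open horizontal crossing of a rectangle in the reference measure is mapped to an open crossing of a topologically rectangular region under $\PP_\bp^\tri$; since individual moves deform the embedding only on a bounded scale, this image contains a genuine Euclidean rectangle of comparable aspect ratio. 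This yields a uniform lower bound for crossing probabilities under $\PP_\bp^\tri$, and Proposition~\ref{prop:rsw} extends it to all aspect ratios $\rho>1$. The \bxp\ for $\PP_{1-\bp}^\hex$ then follows by planar duality of $\TT$ and $\HH$, which pairs primal crossings with dual crossings of complementary rectangles.

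For part~(a), I would approximate: given $\bp=(p_0,p_1)$ with $\kappa_\squ(\bp)=0$, consider critical triangular parameters $\bp_\eps=(p_0(\eps),p_1(\eps),\eps)$ satisfying $\kappa_\tri(\bp_\eps)=0$ with $\bp_\eps\to(p_0,p_1,0)$ as $\eps\downarrow 0$, so that the $\eps$-edges become almost surely closed in the limit, reducing the triangular lattice to the square lattice. By part~(b), $\PP_{\bp_\eps}^\tri$ has the $\rho$-\bxp\ with a constant $\b_\rho(\bp_\eps)$; if this constant can be bounded below uniformly in $\eps$, then a monotonicity/continuity argument transfers the \bxp\ to the square limit $\PP_\bp^\squ$.

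The principal technical obstacle is the geometric bookkeeping required by the star--triangle interpolation. The individual moves must be organised into \emph{disjoint} applications, since only disjoint moves simultaneously transport connections; the intermediate parameters must remain in $[0,1)^3$ and bounded away from the degenerate corners of the critical surface, where the couplings of Figure~\ref{fig:coupling} degenerate; and the cumulative Euclidean distortion of the image rectangle must be controlled \emph{uniformly} in its linear size $n$. This same uniform distortion control is what is needed in order to take the limit $\eps\downarrow 0$ in part~(a).
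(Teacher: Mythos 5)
The central mechanism of your proposal does not exist as described. A \stt\ does not move you along the critical surface of the triangular lattice: applied to a triangle carrying parameters $(p_0,p_1,p_2)$ it produces a \emph{star} carrying $(1-p_0,1-p_1,1-p_2)$, i.e.\ it changes the graph (triangle into star, $\TT$ into $\HH$, or it shifts the square/triangular interface in a mixed lattice) while keeping the parameter triple fixed. No sequence of disjoint \stt s can therefore ``interpolate'' from the homogeneous triple $(\pc(\TT),\pc(\TT),\pc(\TT))$ to a general $\bp$ with $\kappa_\tri(\bp)=0$: there is no path in the critical surface to follow. The route of \cite{GM1} is different and essential: in a mixed lattice (Figure~\ref{fig:lattice_trans}) with a square block carrying $(p_0,1-p_0)$ and a triangular block carrying $\bp$, iterating $\Sd\circ\Tu$ sweeps the interface and transports crossings between the two blocks, giving that $\PP^\squ_{(p_0,1-p_0)}$ has the \bxp\ if and only if $\PP^\tri_\bp$ does. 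The point is that the square side forgets $(p_1,p_2)$, which is what allows one to hop $(\tfrac12,\tfrac12)^\squ \to (\tfrac12,p_1,p_2)^\tri \to (p_1,1-p_1)^\squ \to (p_0,p_1,p_2)^\tri$, seeded by the classical Russo--Seymour--Welsh bound for $\PP^\squ_{(\frac12,\frac12)}$. Without this (or an equivalent device) your interpolation has no content. Moreover, even granting a transport, you only address the easy half: a crossing parallel to the direction of the sweep survives with endpoints displaced by at most the number of moves, but since that number is of order $n$ (one per row, not ``bounded scale''), crossings transverse to the sweep may shrink, and controlling them is precisely the hard half of \cite{GM1}; it requires a probabilistic estimate exploiting the randomness in the coupling of Figure~\ref{fig:coupling}, not just the deterministic path-image property.

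The logic of (a) versus (b) is also inverted. Since part (b) allows $\bp\in[0,1)^3$, the case $p_2=0$ is already included, and the triangular lattice with the $p_2$-edges deleted is (up to an affine re-embedding, which does not affect the \bxp) the square lattice, with $\kappa_\tri(p_0,p_1,0)=\kappa_\squ(p_0,p_1)$; so part (a) would be an immediate special case of part (b), and no limit $\eps\downarrow 0$ is needed. In any case your limiting argument requires the \bxp\ constant to be uniform in $\eps$, which is exactly as hard as the statement being proved. In the actual proof, part (a) is not a consequence of part (b) but its engine: the critical square-lattice models $(p_0,1-p_0)$, starting from the homogeneous one, are the intermediaries through which every critical triangular triple, and by duality every critical hexagonal triple, is reached.
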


In the remainder of this section, we outline the proof of Theorem~\ref{main_result} and indicate the
further steps necessary for Theorem~\ref{thm:eq}. The starting point is the observation
of Baxter and Enting \cite{Baxter_399} that the \stt\ may be
used to transform the square into the triangular lattice. Consider the `mixed lattice'  on the left of Figure
\ref{fig:lattice_trans} (taken from \cite{GM1}),
in which there is an interface $I$ separating the square
from the triangular parts. Triangular edges have length $\sqrt 3$
and vertical edges length $1$. We apply
the \stt\ to every upwards pointing triangle, and then to every downwards pointing star.
The result is a translate of the mixed lattice with the interface lowered by one step. When performed
in reverse, the interface is raised one step.

\begin{figure}[!b]
\vspace*{-3pt}
  \begin{center}
    \cpsfrag{tu}{$\Tu$}
    \cpsfrag{td}{$\Td$}
    \cpsfrag{su}{$\Su$}
    \cpsfrag{sd}{$\Sd$}
    \includegraphics[width=1.0\textwidth]{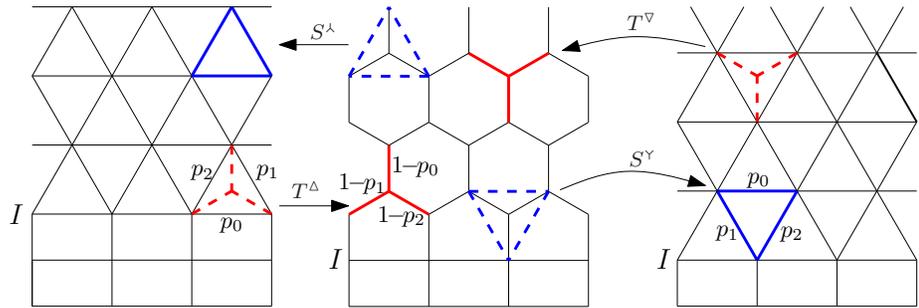}
  \end{center}
\vspace*{-6pt}
  \caption{Transformations $\Su$, $\Sd$, $\Tu$, and $\Td$ of mixed lattices.
    The transformations map the dashed stars/triangles to
    the bold stars/triangles.
    The interface-height decreases by $1$ from the leftmost to the rightmost graph.}
  \label{fig:lattice_trans}
\vspace*{-6pt}
\end{figure}

This star--triangle map is augmented with probabilities as follows.
Let $\bp=(p_0,p_1,p_2)\in
[0,1)^3$ satisfy $\kappa_\tri(\bp)= 0$.
An edge $e$ of the mixed lattice is declared \emph{open} with probability:
\begin{letlist}
\item $p_0$ if $e$ is horizontal,
\item $1- p_0$ if $e$ is vertical,
\item $p_1$ if $e$ is the right edge of an upwards pointing triangle,
\item $p_2$ if $e$ is the left edge of an upwards pointing triangle,
\end{letlist}
and the ensuing product measure is written $\PP_\bp$.
Write $\Sd \circ \Tu$ for the left-to-right map of Figure~\ref{fig:lattice_trans},
and $\Su \circ \Td$ for the right-to-left map.
As described in Section~\ref{sec:stt},
each $\tau \in\{\Sd \circ \Tu, \Su \circ \Td\}$  may be extended to maps between configuration spaces,
and they give rise to
couplings of the relevant probability measures under which local open connections are preserved.
It follows that, for a open path $\pi$  in the domain of $\tau$,
the image $\tau(\pi)$ contains an open path
$\pi'$
with endpoints within distance 1 of those of $\pi$, and furthermore every point of $\pi'$ is within
distance $1$
of some point in $\pi$. We shall speak of $\pi$ being transformed to $\pi'$ under $\tau$.

Let $\alpha >2$ and let $R_N$ be a $2\alpha N \times N$ rectangle in the square part of a mixed lattice.
Since $\PP_\bp$ is a product measure, we may take as interface
the set $\ZZ\times\{N\}$.
Suppose there is an open path $\pi$ crossing $R_N$ horizontally.
By making $N$ applications of $\Sd \circ \Tu$,
$\pi$ is transformed into an open path $\pi'$ in the triangular part of the lattice. As above, $\pi'$ is
within distance $N$ of $\pi$, and its endpoints are within distance $N$ of those of $\pi$. As illustrated
in Figure~\ref{fig:hsq2tr}, $\pi'$ contains a horizontal crossing of a $2(\alpha-1) \times 2N$ rectangle $R'_N$
in the triangular lattice. It follows that
$$
\PP^\tri_\bp(C(R'_N)) \ge \PP_{(p_0,1-p_0)}^\squ(C(R_N)), \qq N \ge 1.
$$
This is one of two inequalities that jointly imply that,
if $\PP_{(p_0,1-p_0)}^\squ$ has the \bxp\, then so does $\PP_\bp^\tri$.
The other such inequality is concerned with vertical crossings of rectangles.
It is not so straightforward to derive, and makes use of a probabilistic estimate
based on the randomization within the map  $\Sd \circ \Tu$ given in Figure~\ref{fig:coupling}.

\begin{figure}[t]
\centering
    \includegraphics[width=0.9\textwidth]{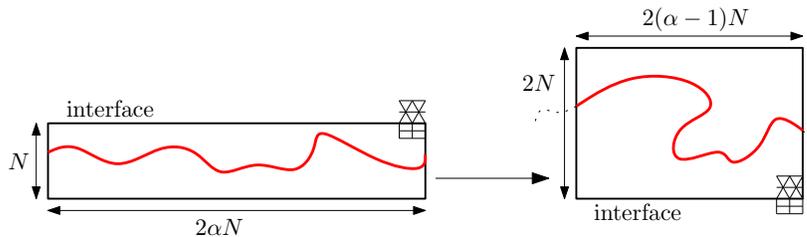}
  \caption{After $N$ applications of $\Sd \circ \Tu$, a horizontal crossing of a rectangle of $\ZZ^2$
has been transformed into a crossing of a
rectangle in $\TT$ with altered aspect-ratio.}
  \label{fig:hsq2tr}
\end{figure}

One may similarly show that $\PP_{(p_0,1-p_0)}^\squ$ has the \bxp\ whenever $\PP_\bp^\tri$ has it.
As above, two inequalities are needed, one of which is simple and the other less so. In summary, $\PP_{(p_0,1-p_0)}^\squ$
has the \bxp\ if and only if $\PP_\bp^\tri$ has it. The
reader is referred to \cite{GM1} for the details.

Theorem~\ref{main_result} follows thus. It was shown by
Russo \cite{Russo} and by Seymour and Welsh \cite{Seymour-Welsh}
that $\PP_{(\frac12,\frac12)}^\squ$ has the \bxp\ (see also \cite[Sect. 5.5]{Grimmett_Graphs}).
By the above, so does $\PP^\tri_\bp$ for $\bp=(\frac12,p_1,p_2)$
whenever $\kappa_\tri(\frac12,p_1,p_2)=0$.
Similarly, so does $\PP_{(p_1,1-p_1)}^\squ$, and therefore also $\PP_\bp^\tri$
for any triple $\bp=(p_0,p_1,p_2)$ satisfying $\kappa_\tri(\bp)=0$.

\begin{figure}[t]
  \begin{center}
    \cpsfrag{tu}{$T^-$}
    \cpsfrag{td}{$T^+$}
    \cpsfrag{su}{$S^+$}
    \cpsfrag{sd}{$S^-$}
    \includegraphics[width=1.0\textwidth]{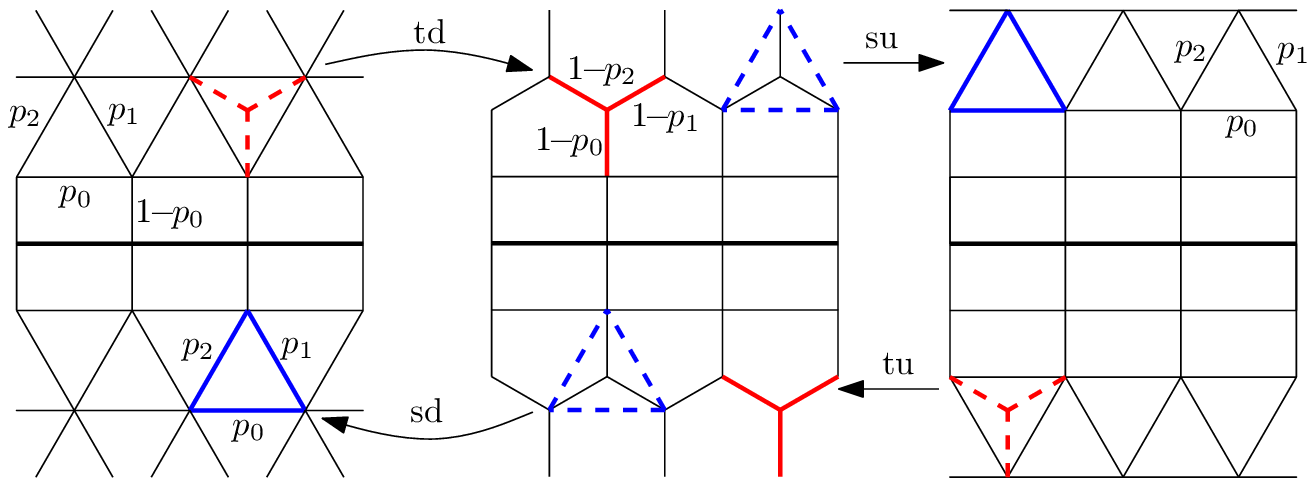}
  \end{center}
\vspace*{-6pt}
  \caption{The transformation
$S^+ \comp T^+$ (\resp, $S^- \comp T^-$)
transforms $\Lattice^1$ into $\Lattice^2$
(\resp, $\Lattice^2$ into $\Lattice^1$).
    They map the dashed graphs to
    the bold graphs. }
  \label{fig:lattice_strip}
\end{figure}

We close this section with some notes on the further steps required for Theorem~\ref{thm:eq}.
We restrict ourselves to a consideration of the radial exponent $\rho$, and the reader
is referred to \cite{GM2} for the alternating-arm exponents.
Rather than the mixed lattices of Figure~\ref{fig:lattice_trans}, we consider
the hybrid lattices $\LL^m$ of Figure~\ref{fig:lattice_strip}
having a band of square lattice of width $2m$, with triangular sections above and below.
The edges of triangles have length $\sqrt 3$ and the vertical edges  length $1$.
The edge-probabilities of $\LL^m$ are as above, and the resulting measure is
denoted~$\PP_\bp^m$.

\begin{figure}[t]
\centering
    \includegraphics[width=0.5\textwidth]{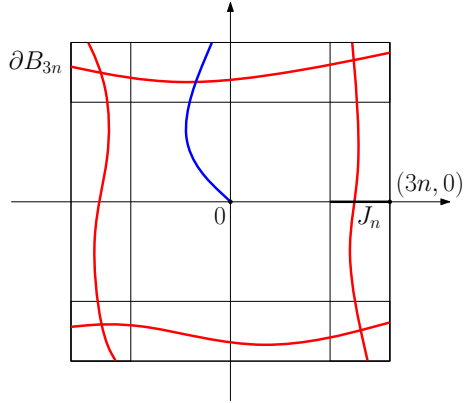}
  \caption{If $0$ is connected to $\pd B_{3n}$ and the four
box-crossings occur, then $0$ is connected to the line-segment $J_n$. }
  \label{fig:radius}
\end{figure}


Let $n \ge 3$,  and $B_n = [-n,n]^2 \subseteq \RR^2$, and write
$A_n=\{0\lra \pd B_n\}$ where $\pd B$ denotes the boundary
of the box $B$.
Let $J_n$ be the line-segment $[2n,3n]\times \{0\}$, and note that $J_n$
is invariant under the lattice
transformations of Figure~\ref{fig:lattice_strip}.
If $A_{3n}$ occurs, and in addition the four rectangles illustrated in Figure~\ref{fig:radius}
have crossings, then $0 \lra J_n$.
Let $\bp\in[0,1)^3$. By Theorem~\ref{main_result} and
positive association, there exists $\a>0$ such that,
for $n \ge 3$,
\begin{align}
\a \PP^\squ_\bp(A_{3n}) &\le  \PP^\squ_\bp(0 \lra J_n) \le  \PP_\bp^\squ(A_{2n}),\label{g10}\\
\a \PP^\tri_\bp(A_{3n}) &\le   \PP^\tri_\bp(0 \lra J_n) \le  \PP_\bp^\tri(A_{2n}).\label{g11}
\end{align}

By making $3n$ applications of the mapping $S^+\circ T^+$ (\resp, $S^-\circ T^-$)
of Figure~\ref{fig:lattice_strip},
we find that
$$
\PP_{(p_0,1-p_0)}^\squ(0 \lra J_n) = \PP_\bp^{3n}(0\lra J_n) = \PP_\bp^0(0\lra J_n)
=\PP_\bp^\tri(0 \lra J_n).
$$
The equality of the exponents  $\rho$ (if they exist) for
these two models follows by \eqref{g10}--\eqref{g11},
and the proof of Theorem~\ref{thm:eq}(a)
is completed in similar manner to that of the \bxp, Theorem~\ref{main_result}.

Part (b) is a consequence of Theorem~\ref{main_result}
on applying  Kesten's results of  \cite{Kes87a} about scaling relations at criticality.

\section{Random-cluster model}\label{sec:rcm}

\subsection{Background}

Let $G=(V,E)$ be a finite graph, and  $\Om=\{ 0,1\}^E$. For
$\om\in\Om$, we write $\eta(\om)=\{e\in E: \om(e)=1\}$ for the set of
open edges, and $k(\om)$ for the number of connected components, or
`open clusters', of the subgraph $(V,\eta(\om))$.
The {\it random-cluster measure\/} on $\Om$,
with parameters $p\in[0,1]$, $q\in(0,\oo)$ is the probability measure
\begin{equation}\label{rcmeas}
\fpq (\om )=\frac{1}{Z}\,\biggl\{\prod_{e\in E} p^{\om (e)}
(1-p)^{1-\om (e)}\biggr\} q^{k(\om )} ,\qq\om\in\Om,
\end{equation}
where $Z=Z_{G,p,q}$ is the normalizing constant.
We assume throughout this section that $q \ge 1$, and
for definiteness shall work only with
the hypercubic lattice $\LL^d=(\ZZ^d, \EE^d)$ in $d \ge 2$ dimensions.

This measure was introduced by Fortuin and Kasteleyn in
a series of papers around 1970, in a unification
of electrical networks, percolation, Ising, and Potts models.
Percolation is retrieved by setting $q=1$,
and electrical networks
arise via the limit $p,q\to 0$ in such a way that $q/p\to 0$.
The relationship to Ising/Potts models is more complex
in that it involves a transformation
of measures. In brief, two-point connection
probabilities for the \rc\ measure with $q\in\{2,3,\dots\}$
correspond to correlations for ferromagnetic Ising/Potts models,
and this allows a geometrical interpretation
of their correlation structure.
A fuller account of the \rc\ model and its history and associations may be found in \cite{Grimmett_RCM}.

We omit an account of the properties of \rc\ measures, instead referring the reader to
\cite{Grimmett_RCM,Grimmett_Graphs}. Note however that \rc\ measures
are positively associated whenever $q\ge 1$, in that \eqref{pos-ass} holds for all
pairs $A$, $B$ of increasing events.

The \rc\ measure may not be defined directly on the hypercubic
lattice $\LL^d=(\ZZ^d,\EE^d)$, since this is infinite.
There are two possible ways to proceed, of which we choose here
to use weak limits. Towards this end we
introduce boundary conditions.
Let $\La$ be a finite box in $\ZZ^d$. For $b\in\{ 0,1\}$, define
$$
\Om_\La^b=\{\om\in\Om :\om (e)=b \text{ for } e\notin\EE_\La\} ,
$$
where $\EE_\La$ is the set of edges of $\LL^d$ joining pairs of vertices
belonging to $\La$. Each of the two values of $b$ corresponds to a certain `boundary
condition' on $\La$, and we shall be interested in
the effect of these boundary conditions in the infinite-volume limit.

On $\Om_\La^b$, we define a random-cluster measure
$\phi_{\La ,p,q}^b$ as follows. Let
\begin{equation}\label{13.6}
\phi_{\La ,p,q}^b(\om )=\frac{1}{Z_{\La
,p,q}^b}\,\Biggl\{\prod_{e\in\EE_\La} p^{\om (e)} (1-p)^{1-\om
(e)}\Biggr\} q^{k(\om ,\La )},\qq \om\in\Om_\La^b,
\end{equation}
where $k(\om ,\La )$ is the number of clusters of $(\ZZ^d,\eta (\om ))$
that intersect $\La$. The boundary condition $b=0$ (\resp,
$b=1$) is sometimes termed `free' (\resp, `wired'). The choice of boundary condition
affects the measure through the total number $k(\om,\La)$ of open clusters: when using the wired
boundary condition, the set of  clusters intersecting the boundary of $\La$
contributes only $1$ to this total.

The free/wired boundary conditions are extremal within a broader class.  A boundary
condition on $\La$ amounts to a rule for how to count the clusters intersecting
the boundary $\pd\La$ of $\La$. Let $\xi$ be an equivalence relation  on $\pd\La$; two vertices
$v,w\in\pd \La$ are identified as a single point if and only if $v \xi w$.
Thus $\xi$ gives rise to a cluster-counting function $K(\cdot,\xi)$, and thence a probability
measure $\phi_{\La,p,q}^\xi$ as in \eqref{13.6}. It is an exercise in Holley's
inequality \cite{Hol} to show that
\begin{equation}\label{G11-}
\phi_{\La,p,q}^\xi \lest \phi_{\La,p,q}^{\xi'} \qq \text{if } \xi \le \xi',
\end{equation}
where we write $\xi\le \xi'$ if, for all pairs $v$, $w$, $v\xi w\ \Rightarrow\ v \xi' w$.
In particular,
\begin{equation}\label{G11}
\phi_{\La,p,q}^0 \lest \phi_{\La,p,q}^\xi \lest \phi_{\La,p,q}^1\qq\text{for all } \xi.
\end{equation}

We may now take the infinite-volume limit.
It may be shown that the weak limits
$$
\fpqb =\lim_{\La\to\ZZ^d} \phi_{\La ,p,q}^b,\qq b=0,1,
$$
exist, and are translation-invariant and ergodic (see \cite{G93}).
The limit measures, $\fpqo$ and $\fpqon$, are called `random-cluster measures'
on $\LL^d$, and they are extremal in the following sense.
There is a larger family of measures that can be constructed on $\Om$, either
by a process of weak limits, or by the procedure that gives rise to so-called DLR measures
(see \cite[Chap. 4]{Grimmett_RCM}).
It turns out that $\fpqo \lest \fpq \lest \fpqon$
for any such measure $\fpq$, as in \eqref{G11}.
Therefore, there exists a unique \rc\ measure if and only if
$\fpqo=\fpqon$.

The {\it percolation probabilities} are defined by
\begin{equation}\label{13.8}
\th^b(p,q)=\fpqb (0\lra\infty ),\qq b=0,1,
\end{equation}
and the {\it critical values\/} by
\begin{equation}\label{rccritprob}
\pcb (q)=\sup\{ p:\th^b(p,q)=0\} ,\qq b=0,1.
\end{equation}
We are now ready to present a theorem that gives sufficient conditions under which
$\fpqo=\fpqon$. The proof may be found  in \cite{Grimmett_RCM}.

\begin{thm}\label{rcconvexity}
Let $d\geq 2$ and $q\geq 1$. We have that\/{\rm:}
\begin{letlist}
\item {\rm\cite{ACCN}}  $\fpqo =\fpqon$ if $\th^1(p,q)=0$,
\item {\rm\cite{G93}} there exists a countable subset $\sD_{d,q}$
of\/ $[0,1]$, possibly empty, such that
$\fpqo =\fpqon$ if and only if  $p\notin\sD_{d,q}$.
\end{letlist}
\end{thm}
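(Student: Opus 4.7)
The plan is to tackle (a) via a coupling argument that bounds the gap between $\fpqo$ and $\fpqon$ on a local event by the probability of a long-range open connection, and to tackle (b) by exploiting the convexity of the pressure in the parameter $h = \log[p/(1-p)]$.

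For (a), I would fix a finite box $B$ and a local event $A$ supported on edges in $B$. By the weak-limit definitions of $\fpqb$ together with (\ref{G11}),
$$
0 \le \fpqon(A) - \fpqo(A) = \lim_{\La \uparrow \ZZ^d}\bigl[\phi_{\La,p,q}^1(A) - \phi_{\La,p,q}^0(A)\bigr].
$$
Using a Holley monotone coupling of $\phi_{\La,p,q}^0$ and $\phi_{\La,p,q}^1$ combined with a finite-energy/domain-Markov argument, I would then establish
$$
\bigl|\phi_{\La,p,q}^1(A) - \phi_{\La,p,q}^0(A)\bigr| \le \phi_{\La,p,q}^1(\pd B \lra \pd\La).
$$
The heuristic is that the only mechanism by which the wired boundary can affect an edge inside $B$ is through an open cluster reaching $\pd\La$. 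Passing to the limit, the right-hand side tends to $\fpqon(\pd B \lra \infty)$, which vanishes under the hypothesis $\th^1(p,q)=0$ by translation-invariance of $\fpqon$ and a union bound over the finitely many vertices of $\pd B$.

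For (b), I would introduce the pressure
$$
\psi(h) := \lim_{\La \uparrow \ZZ^d}\frac{\log Z_{\La,p,q}^b}{|\EE_\La|},\qquad h := \log\frac{p}{1-p},
$$
which exists by a subadditivity argument and is independent of $b \in \{0,1\}$. Rewriting (\ref{13.6}) in the form $\phi_{\La,p,q}^b \propto e^{h|\eta(\om)|}q^{k(\om,\La)}$ yields
$$
\frac{d}{dh}\frac{\log Z_{\La,p,q}^b}{|\EE_\La|} = \frac{\phi_{\La,p,q}^b(|\eta(\om)|)}{|\EE_\La|},
$$
which is increasing in $h$, so $\psi$ is convex. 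By translation-invariance of $\fpqb$, the right side converges, for each fixed $h$, to $\fpqb(\om(e_0)=1)$ for any reference edge $e_0$. A convex-function-limits argument then identifies the right- and left-derivatives of $\psi$ at $h$ with $\fpqon(\om(e_0)=1)$ and $\fpqo(\om(e_0)=1)$, respectively. Convexity ensures $\psi$ is differentiable outside a countable subset of $h$-values; let $\sD_{d,q}$ denote the corresponding set of $p$. At each $p \notin \sD_{d,q}$, the two extremal measures share the same marginal on every edge, and since $\fpqo \lest \fpqon$, any Strassen-type monotone coupling $(\xi,\eta)$ satisfies $\eta(e)-\xi(e) \ge 0$ with zero mean, forcing $\xi = \eta$ almost surely and hence $\fpqo = \fpqon$.

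The principal technical obstacle lies in part (b), specifically the identification of one-sided derivatives of $\psi$ with edge-densities under the two extremal measures. This requires justifying the interchange of derivative and infinite-volume limit; the standard route rests on Griffiths-type convex-analysis lemmas together with the monotonicity of $\phi_{\La,p,q}^b$ both in $b$ and in the box $\La$. The passage from countable non-differentiability in $h$ to countable non-differentiability in $p$ is immediate since $h \mapsto p$ is a diffeomorphism on $(0,1)$.
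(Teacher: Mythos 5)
The paper does not actually prove Theorem \ref{rcconvexity}: it cites \cite{ACCN} for (a) and \cite{G93} for (b), deferring details to \cite{Grimmett_RCM}, and your outline follows precisely those cited arguments --- the screening/cutset coupling showing that wired--free disagreement on a box is controlled by $\phi_{\La,p,q}^1(\pd B \lra \pd\La)$ for (a), and convexity of the pressure in $h=\log[p/(1-p)]$ with one-sided derivatives identified with the wired and free edge densities, plus the Strassen monotone-coupling step, for (b). One small refinement: in (a) your disagreement inequality is cleanest when stated for \emph{increasing} cylinder events $A$ (condition on the outermost closed cutset separating $B$ from $\pd\La$, inside which the conditional law is a free-boundary measure), and this restriction costs nothing since equality of $\fpqo$ and $\fpqon$ on increasing cylinder events together with $\fpqo \lest \fpqon$ already forces equality of the two measures.
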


By Theorem~\ref{rcconvexity}(b), $\th^0(p,q)=\th^1(p,q)$ for $p\notin\sD_{d,q}$,
whence $\pco (q)=\pcon (q)$ by the monotonicity of the $\th^b(\cdot,q)$. Henceforth we refer
to the critical value as $\pc (q)$.
It is a basic fact that $\pc(q)$ is non-trivial, which is to say that
$0<\pc(q)<1$ whenever $d \ge 2$ and $q \ge 1$.

It is an open problem to find a satisfactory definition of $\pc(q)$ for
$q<1$. Despite the failure of positive association in this case,
it may be shown by the so-called  `comparison inequalities'
(see \cite[Sect.\ 3.4]{Grimmett_RCM})
that there exists no infinite cluster for $q\in(0,1)$ and small $p$, whereas there is
an infinite cluster for $q\in(0,1)$ and large $p$.

The following is an important conjecture ccncerning the discontinuity set $\sD_q$.

\begin{conj}\label{uniqconj}
There exists $Q(d)$ such that\/{\rm:}
\begin{letlist}
\item if $q<Q(d)$, then $\th^1(\pc,q)=0$ and $\sD_{d,q}=\es$,
\item if $q>Q(d)$, then $\th^1(\pc,q)>0$ and $\sD_{d,q}=\{\pc\}$.
\end{letlist}
\end{conj}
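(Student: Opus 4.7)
The plan is to identify $Q(d)$ as the sharp threshold separating a continuous phase transition from a first-order one, and to establish the two claims by a combination of monotonicity, small-$q$ continuity, and large-$q$ contour arguments. A natural first step is to define $Q(d) := \sup\{q \ge 1 : \th^1(\pc(q),q) = 0\}$, and then to prove that the set of $q$ with $\th^1(\pc(q),q)=0$ is of the form $[1,Q(d))$ (or $[1,Q(d)]$). This dichotomy requires showing that once the transition is first-order at some value of $q$, it remains first-order for all larger $q$; a plausible route is via coupling and reflection-positivity bounds that monotonically track the size of the jump in $\th^1$ as $q$ increases along the critical surface $p=\pc(q)$.

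For part (a), the goal is $\th^1(\pc,q) = 0$ throughout $[1,Q(d))$. The base case $q=1$ is Bernoulli percolation, where $\th(\pc)=0$ is the content of Conjecture~\ref{thetapc0}, established in two dimensions and for sufficiently large $d$. For $q$ slightly above $1$, comparison inequalities between \rc\ measures at different values of $q$ (with suitably shifted edge-parameter) together with continuity of $\pc(q)$ should transfer the vanishing of $\th^1$ to a neighbourhood of $q=1$. Once this is in hand, Theorem~\ref{rcconvexity}(a) yields $\fpqo=\fpqon$ at $p=\pc$; for $p\ne\pc$, uniqueness follows from the standard fact that $\th^b(\cdot,q)$ can have at most one discontinuity, necessarily at $\pc$, so that $\sD_{d,q}=\es$.

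For part (b), one invokes the reflection-positivity and chessboard-estimate machinery of Laanait, Messager, Miracle-Sole, Ruiz, and Shlosman, which proves that for $q$ sufficiently large the \rc\ model on $\LL^d$ has a first-order transition at $\pc(q)$. Their construction produces coexisting \emph{ordered} and \emph{disordered} infinite-volume states at $p=\pc$, giving $\th^1(\pc,q)>0$ while $\th^0(\pc,q)=0$, so that $\fpqo\ne\fpqon$ at $p=\pc$ and hence $\pc\in\sD_{d,q}$. To upgrade this to $\sD_{d,q}=\{\pc\}$, one combines Theorem~\ref{rcconvexity}(b) with the aforementioned single-discontinuity property of $\th^b(\cdot,q)$, which places every point of $\sD_{d,q}$ at $\pc$.

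The hard part will be proving the sharpness of the threshold, namely that the `continuous' and `discontinuous' regimes are separated by a single value $Q(d)$ rather than an intricate set on which the two behaviours alternate. This seems to require quantitative control of how $\th^1(\pc(q),q)$ depends on $q$, together with precise information about $\pc(q)$ along the critical surface, and a uniform expansion of the partition function bridging the small-$q$ and large-$q$ regimes. In two dimensions one expects $Q(2)=4$, partially accessible via planar duality and discrete complex-analytic techniques; in higher dimensions even the value of $Q(d)$ is not rigorously identified, and the conjecture in its full generality appears to lie beyond present techniques.
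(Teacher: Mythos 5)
The statement you are attempting to prove is stated in the paper as a \emph{conjecture} (Conjecture~\ref{uniqconj}); the paper offers no proof and records only partial results: a first-order transition for sufficiently large $q$ \cite{Kot-S,LMMRS}, the bound $Q(2)\le 25.72$, and recent progress towards $Q(2)=4$ in \cite{BDS2}. So there is no paper proof to compare against, and your proposal should be judged as an attempt on an open problem. As such it contains genuine gaps at every structural joint. First, your part (a) base case is itself open: $\th(\pc)=0$ for Bernoulli percolation on $\LL^d$ is Conjecture~\ref{thetapc0}, known only for $d=2$ and large $d$, so even $q=1$ cannot serve as an unconditional starting point for general $d$. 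Moreover the comparison inequalities between \rc\ measures at different $q$ shift the effective edge-parameter; they do not track the critical surface $p=\pc(q)$, so they cannot transfer $\th^1(\pc(q),q)=0$ to nearby $q$ without exactly the kind of quantitative control at criticality that is missing.

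Second, your appeal to ``the standard fact that $\th^b(\cdot,q)$ can have at most one discontinuity, necessarily at $\pc$,'' and hence that $\sD_{d,q}\subseteq\{\pc\}$, is circular: Theorem~\ref{rcconvexity}(b) yields only that $\sD_{d,q}$ is countable, and the assertion that non-uniqueness can occur only at $\pc$ is part of what the conjecture claims, not an available tool. Third, the monotonicity step on which your dichotomy rests --- that once the transition is first-order at some $q$ it remains so for all larger $q$ along the critical surface --- is not known; no coupling or reflection-positivity argument is available that compares the magnitude of the jump in $\th^1$ at $\pc(q)$ across different $q$, and without it the definition $Q(d)=\sup\{q:\th^1(\pc(q),q)=0\}$ gives no control on either side of $Q(d)$. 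Finally, for part (b) the chessboard/Pirogov--Sinai results of \cite{Kot-S,LMMRS} apply only for $q$ beyond some large constant (in $d=2$, the best rigorous bound being $25.72$), not for all $q>Q(d)$; bridging the interval between that constant and the putative threshold is precisely the open core of the problem, as your closing paragraph in effect concedes.
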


In the physical vernacular, there is conjectured to exist a critical value
of $q$ beneath which the phase transition is continuous (`second order')
and above which it is discontinuous (`first order').
It was proved in \cite{Kot-S, LMMRS}
that there is a first-order transition for large $q$, and it is expected  that
$$
Q(d)= \begin{cases} 4 &\text{if } d=2,\\
2 &\text{if } d\ge 6.
\end{cases}
$$
This may be contrasted with the best current rigorous estimate
in two dimensions, namely $Q(2) \le 25.72$, see \cite[Sect. 6.4]{Grimmett_RCM}.
Recent progress towards a proof that $Q(2)=4$ is found in \cite{BDS2}.

The third result of this article concerns the behaviour of the \rc\ model on the square lattice $\LL^2$,
and particularly its critical value.

\subsection{Critical point on the square lattice}\label{sec:critpt}

\begin{thm}[\cite{Beffara_Duminil}]\label{thm@pc}
The \rc\ model on $\LL^2$ with cluster-weighting factor $q \ge 1$ has
critical value
$$
\pc(q) = \frac {\sqrt q}{1+\sqrt q}.
$$
\end{thm}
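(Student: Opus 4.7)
The plan is to identify the critical point with the self-dual point $\psd(q) := \sqrt q/(1+\sqrt q)$, exploiting the planar duality of the random-cluster model in two dimensions. First I would recall that on $\LL^2$, the dual of $\fpqon$ is a random-cluster measure on the (isomorphic) dual square lattice with cluster-weight $q$ and edge-parameter $p^*$ satisfying
$$
\frac{p^*}{1-p^*} = \frac{q(1-p)}{p}.
$$
The unique fixed point of $p\mapsto p^*$ in $(0,1)$ is $\psd(q)$, so the theorem is equivalent to the identity $\pc(q) = \psd(q)$.

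\textbf{Lower bound $\pc(q) \ge \psd(q)$.} I would adapt Zhang's argument. Assume for contradiction that $\th^1(\psd,q) > 0$; then by the self-duality above, the dual configuration also contains an infinite open cluster almost surely. Exploiting positive association (available since $q\ge 1$) together with the four-fold symmetry of a large square $[-n,n]^2$, one concludes that with positive probability each of its four sides is joined to infinity by both a primal and a dual infinite cluster. Burton--Keane uniqueness of the infinite cluster then forces a planar contradiction, since the infinite dual cluster must separate the four primal arms. Hence $\th^1(\psd,q) = 0$, and by monotonicity of $\th^1(\cdot,q)$ we obtain $\pc(q) \ge \psd(q)$.

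\textbf{Upper bound $\pc(q) \le \psd(q)$.} This is the harder direction. The strategy is to establish a sharp-threshold inequality for the crossing probabilities $f_n(p) := \fpq(C([0,n]^2))$. By Russo's formula,
$$
f_n'(p) = \sum_{e} \fpq(e \text{ is pivotal for } C([0,n]^2)),
$$
and one can combine this with a Talagrand/OSSS-type influence estimate, adapted to the positively-associated measure $\fpq$, to derive a differential inequality of the form $f_n'(p) \ge c\, f_n(p)(1-f_n(p))\, \log n$ on the critical window. Self-duality provides the boundary estimate $f_n(\psd) \ge \tfrac12$ (since the absence of a primal horizontal crossing of a square is equivalent to the presence of a dual vertical closed crossing, and the two events have equal probability at the self-dual point by duality). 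Integrating the differential inequality from $\psd$ to $\psd + \eps$ then forces $f_n(p) \to 1$ exponentially fast for every $p > \psd$. Combined with the box-crossing formalism of Section~\ref{sec:bxp}, this yields $\th^0(p,q) > 0$ for $p > \psd$, and hence $\pc(q) \le \psd(q)$.

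\textbf{Main obstacle.} The delicate step is the upper bound, and specifically the derivation of a quantitative differential inequality compatible with the long-range dependencies introduced by the factor $q^{k(\om)}$ in \eqref{rcmeas}. In the percolation case $q=1$ the required sharpness results are classical, but for $q > 1$ one must use positive association and the monotonic comparison \eqref{G11-} carefully to transfer influence/pivotality estimates from product measures to $\fpq$. By contrast, the self-duality computation is algebraic, and Zhang's argument is a routine adaptation once positive association and Burton--Keane uniqueness are in hand.
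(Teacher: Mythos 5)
Your overall strategy (self-duality to locate $\psd$, Zhang-type uniqueness argument for the lower bound, sharp threshold for the upper bound) is aligned with the route of \cite{Beffara_Duminil} followed in Section~\ref{sec:rcm-pf}, but as written the proposal has genuine gaps, all concentrated in the upper bound. First, Russo's formula in the product-measure form you quote is false for $q>1$; the derivative of $\fpq(A)$ is a covariance, not a sum of pivotal probabilities, and influence estimates cannot simply be ``transferred'' from product measures by stochastic comparison. The actual mechanism is the sharp-threshold theorem of \cite{GG_inf} for measures satisfying the FKG lattice condition, combined with the symmetrization of \cite{BR06}, which requires working with the translation-invariant periodic-boundary measure $\phi^\tp_{p,m}$ (Proposition~\ref{thm:rsw-rc+}). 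Second, your boundary estimate $f_n(\psd)\ge\tfrac12$ is not justified: for $q>1$ the dual of the free measure is the \emph{wired} measure, so no single infinite-volume (or free finite-volume) measure is self-dual, and this boundary-condition mismatch is precisely the difficulty the paper identifies. The paper circumvents it by working on the torus, where an amended Euler formula yields only a constant $c_1(q)>0$ for square crossings, see \eqref{bf2}, not $\tfrac12$.

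The most serious omission is that crossings of \emph{squares}, even with probability tending to $1$, do not by themselves yield an infinite cluster: one needs crossings of rectangles of aspect ratio exceeding $1$ at the self-dual point, i.e.\ an RSW-type statement, and for $q>1$ this cannot be cited from Section~\ref{sec:bxp}, which concerns product measures (positive association alone does not bootstrap square crossings to rectangle crossings). Establishing this is the central new contribution of \cite{Beffara_Duminil}: Proposition~\ref{thm:rsw-rc}, proved by conditioning on the highest and rightmost crossings and applying duality in the reflection domain $G(\g_1,\g_2)$ with mixed wired/free boundary conditions, giving the bound \eqref{bf4} with constant $1/(1+q^2)$. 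Your outline contains no substitute for this step. Finally, even granted Propositions~\ref{thm:rsw-rc} and~\ref{thm:rsw-rc+}, concluding $\th^\tw(p)>0$ for $p>\psd$ is not a routine application of a ``box-crossing formalism'': the crossing estimates hold for specific finite-volume measures, the annulus events at different scales are not independent, and one must run the renormalization \eqref{bf7}--\eqref{g13} conditioning on outermost open circuits and using the stochastic ordering \eqref{G11-} of boundary conditions. As a minor point on the lower bound, your Zhang argument should start from $\th^\tf(\psd)>0$ rather than $\th^\tw(\psd)>0$: the dual of the wired measure is the free measure, so your assumption does not by itself give an infinite dual cluster, whereas assuming $\th^\tf(\psd)>0$ does, since the dual (wired) measure dominates the free one.
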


This exact value has been `known' for a long time, but the full proof has
been completed only recently.
When $q=1$, the statement $\pc(1)=\frac12$ is the Harris--Kesten theorem for bond percolation.
When $q=2$, it amounts to the well known
calculation of the critical temperature of the
Ising model. For large $q$, the result was proved  in \cite{LMMRS, LMR}.
There is a `proof' in the physics paper \cite{HKW} when $q \ge 4$.
It has been known since \cite{G93,Welsh93} that $\psd(q) \ge \sqrt q/(1+\sqrt q)$.

\begin{figure}[t]
  \begin{center}
    \includegraphics[width=0.6\textwidth]{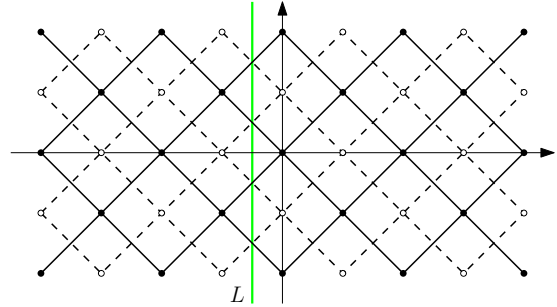}
  \end{center}
  \caption{The lattice $\LL^2$ and its dual, rotated through $\pi/4$. Under reflection in the green line $L$,
the primal is mapped to the dual.}
  \label{fig:z2rot}
\end{figure}

The main complication of Theorem~\ref{thm@pc}
beyond the $q=1$ case stems from the interference of boundary conditions
in the proof and applications of the \bxp, and this is where the major contributions of \cite{Beffara_Duminil}
are to be found. We summarize first the statement of the \bxp; the proof
is outlined in Section~\ref{sec:pf-bxp}. It is convenient
to work on the square lattice $\LL^2$ rotated through $\pi/4$, as illustrated in Figure~\ref{fig:z2rot}.
For $a=(a_1,a_2)\in \NN^2$ and $b=(b_1,b_2)\in\NN^2$,  the
\emph{rectangle} $R_{a,b}$ of this graph is the subgraph induced by the vertices lying inside
the rectangle $[a_1,a_2] \times [b_1,b_2]$ of $\RR^2$.
We shall consider two types of boundary condition on
$R_{a,b}$. These affect the counts  of clusters, and therefore the measures.
\begin{list}{}{\setlength{\labelwidth}{2.3cm}\setlength{\leftmargin}{2.8cm}}
\item[\emph{Wired} ($1$):] all vertices in the boundary of a rectangle are identified as a single vertex.
\item[\emph{Periodic} ($\tp$):] each vertex $(a_1,y)$ (\resp, $(x, b_1)$) of the boundary  of $R_{a,b}$ is wired to
$(a_2, y)$ (\resp, $(x,b_2)$).
\end{list}

Let $q \ge 1$, and write $\psd=\sqrt q/(1+\sqrt q)$ and $B_m=R_{(-m,-m),(m,m)}$.
The suffix in $\psd$ stands for `self-dual', and its use is explained
in the next section. The \rc\ measure on
$B_m$ with parameters $p$, $q$ and boundary condition $b$ is denoted $\phi_{p,m}^b$.
For a rectangle $R$, we write $\Ch(R)$ (\resp, $\Cv(R)$) for the event that $R$ is crossed
horizontally (\resp,
vertically) by an open path.

\begin{prop}[\cite{Beffara_Duminil}]\label{thm:rsw-rc}
There exists $c=c(q)>0$ such that, for $m > \frac32 n > 0$,
$$
\phi_{\psd,m}^\tp\left[\Ch\bigl([0,\tfrac32 n)\times[0,n)\bigr)\right] \ge c.
$$
\end{prop}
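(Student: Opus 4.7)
My approach follows the classical Russo--Seymour--Welsh blueprint, adapted to the random-cluster setting where the absence of independence forces careful tracking of boundary conditions. The plan is first to establish a lower bound for crossings of a square from self-duality on the torus, and then to extend it to the $\tfrac{3}{2}$-aspect-ratio rectangle by gluing via positive association.

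\emph{Step 1 (square crossings via duality).} First I would use the fact that the periodic boundary condition turns $B_m$ into a discrete torus on which, at $p=\psd$, the random-cluster measure is self-dual: the dual configuration on the dual torus has the same law as the primal, modulo a half-face shift. For a sub-square $S = [-n,n)^2 \subset B_m$, a standard planar-topology dichotomy on the torus says that in any configuration either $S$ has a horizontal open primal crossing or a suitably translated copy of $S$ has a vertical open dual crossing; combined with self-duality and translation invariance of $\phi^\tp_{\psd,m}$, this should yield $\phi^\tp_{\psd,m}[\Ch(S)] \ge \tfrac{1}{2} - o(1)$, uniformly in admissible $n$ and $m$.

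\emph{Step 2 (RSW extension to rectangles).} Next I would write $R = [0,\tfrac{3}{2}n)\times[0,n)$ as the union of two overlapping squares $S_1 = [0,n)^2$ and $S_2 = [\tfrac{n}{2},\tfrac{3}{2}n)\times[0,n)$. Each carries a horizontal crossing with probability at least $\tfrac{1}{2}-o(1)$ by translation invariance and Step 1. Applying the standard RSW reflection trick (about the vertical axis $\{x=\tfrac{3n}{4}\}$) combined with the positive association of $\phi^\tp_{\psd,m}$ (Holley's theorem, valid for $q\ge 1$), I would build a horizontal crossing of $R$ by splicing a horizontal crossing of $S_1$ to one of $S_2$ via a vertical crossing of the overlap $[\tfrac{n}{2},n)\times[0,n)$, itself produced by the square estimate of Step 1 applied to a rotated square.

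\emph{Main obstacle.} The hard part will be the boundary conditions. Unlike for percolation, conditioning on a crossing event alters the effective boundary condition on its complement, so the events being combined are not independent and the crossing estimates do not transfer directly after conditioning. My plan to resolve this is to exploit the comparison inequality \eqref{G11-}: by revealing configurations along suitable stopping paths (such as the topmost horizontal crossing of $S_1$, explored from outside inwards), I would arrange that the induced boundary condition on the unexplored region dominates stochastically the periodic one, so that the square-crossing bound of Step 1 can be reapplied. Making these estimates uniform in $m$ and $n$, and handling the topological subtleties of the toroidal geometry (non-contractible loops affecting the cluster count), is what I expect to be the main technical load, and is the essence of the contribution of \cite{Beffara_Duminil}.
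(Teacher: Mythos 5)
Your overall architecture matches the paper's: square crossings on the torus via self-duality, then a gluing over the rectangle $[0,\tfrac32 n)\times[0,n)$ using positive association, extremal (highest/rightmost) crossings, and the comparison \eqref{G11-}. (A minor quantitative point: on the torus the homotopy correction to Euler's formula only yields a constant $c_1(q)>0$ as in \eqref{bf2}, not $\tfrac12-o(1)$ for general $q\ge1$; this is harmless since only positivity is needed.) But there are two genuine gaps. First, the splicing step as described begs the question: the overlap $[\tfrac12 n,n)\times[0,n)$ is an $\tfrac n2\times n$ rectangle, and the top-to-bottom crossing you want of it is a long-direction crossing — precisely the kind of estimate RSW is meant to produce, and not obtainable from the square bound of Step 1. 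A vertical crossing of the full square $S_2=[\tfrac12 n,\tfrac32 n)\times[0,n)$ is available by rotation invariance, but it may live entirely in $[n,\tfrac32 n)\times[0,n)$ and never meet the horizontal crossing of $S_1$. The paper sidesteps this by taking a vertical crossing of $S_2$ constrained to start on $\ell=[\tfrac12 n,n)\times\{0\}$ (probability at least $\tfrac12 c_1$ by reflection symmetry) and then \emph{connecting} it to the highest crossing of $S_1$ when the two do not already intersect.

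Second, and more seriously, your proposed cure for the boundary-condition obstacle does not work. After exploring the highest crossing $\Gamma_1$ of $S_1$ and the rightmost crossing $\Gamma_2$ of $S_2$, the conditional measure on the unexplored region is wired along the explored open paths but essentially free elsewhere; it does not stochastically dominate the periodic measure, and in any case the Step-1 estimate is a statement about the torus, not about the random domain left by the exploration. The missing idea — the heart of \cite{Beffara_Duminil} — is a second, domain-level duality estimate: one works in the region $G(\gamma_1,\gamma_2)$ bounded by pieces of $\Gamma_1$, $\Gamma_2$ and their reflections in the line $L$ that exchanges primal and dual lattices, with mixed wired/free boundary conditions. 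Because the reflection maps the connection event $\{\gamma_1\lra\gamma_2\}$ to the complementary dual connection, and because re-wiring $\rho\gamma_1$ with $\rho\gamma_2$ changes the cluster count by at most $1$ (so the relevant Radon--Nikod\'ym derivative lies in $[q^{-1},q]$), one obtains $\phi_{\gamma_1,\gamma_2}(\gamma_1\lra\gamma_2)\ge 1/(1+q^2)$ uniformly, i.e.\ \eqref{bf4}; this is then transferred to the conditional measure via \eqref{G11-} as in \eqref{G201}. It is this self-dual bound in the explored domain, not a re-application of the torus estimate, that replaces independence in the classical RSW gluing, and without it your plan stalls at exactly the point you flagged as the main obstacle.
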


The choice of periodic boundary condition is significant
in that the ensuing measure is translation-invariant.
Since the measure is invariant also under rotations through $\pi/2$, this inequality
holds also for crossings of vertical boxes.
Since \rc\ measures are positively associated,  by Proposition~\ref{prop:rsw},
the measure $\phi_{\psd,m}^\tp$ satisfies a `finite-volume' $\rho$-\bxp\ for all $\rho>1$.

An infinite-volume version of Proposition~\ref{thm:rsw-rc} will be useful later.
Let $m >\frac32 n \ge 1$. By stochastic ordering \eqref{G11},
\begin{align}
\phi_{\psd,m}^1\left[\Ch\bigl([0,\tfrac32 n)\times[0,n)\bigr)\right]
&\ge \phi_{\psd,m}^\tp\left[\Ch\bigl([0,\tfrac32 n)\times[0,n)\bigr)\right]\nonumber\\
&\ge c. \label{bf9-}
\end{align}
Let $m \to\oo$ to obtain
\begin{equation}\label{bf9}
\phi_{\psd,q}^1\left[\Ch\bigl([0,\tfrac32 n)\times[0,n)\bigr)\right]
\ge c, \qq n \ge 1.
\end{equation}
By Proposition~\ref{prop:rsw}, $\phi^1_{\psd,q}$ has
the \bxp.
Equation \eqref{bf9} with $\phi_{\psd,q}^0$ in place of $\phi_{\psd,q}^1$ is false
for large $q$, see \cite[Thm 6.35]{Grimmett_RCM}.

Proposition~\ref{thm:rsw-rc} may be used to show also
the exponential-decay of connection probabilities when $p<\psd$.
See \cite{Beffara_Duminil}
for the details.

This section closes with a note about other two-dimensional models.
The proof of Theorem~\ref{thm@pc}
may be adapted (see \cite{Beffara_Duminil})
to the triangular and hexagonal lattices, thus
complementing known inequalities of \cite[Thm 6.72]{Grimmett_RCM}
for the critical points.  It is an open problem to prove the
conjectured critical surfaces of inhomogeneous models on
$\LL^2$, $\TT$, and $\HH$. See \cite[Sect. 6.6]{Grimmett_RCM}.

\subsection{Proof of the \bxp}\label{sec:pf-bxp}

We outline the proof of Proposition~\ref{thm:rsw-rc},
for which full details may be found  in \cite{Beffara_Duminil}.
There are two steps: first, one uses duality to prove  inequalities
about crossings of certain regions;
secondly, these are used to estimate the probabilities of crossings of rectangles.

\smallskip\noindent
\emph{Step 1, duality}.
Let $G=(V,E)$ be a finite, connected planar graph embedded in $\RR^2$,
and let $G_\td=(V_\td,E_\td)$ be its
planar dual graph.
A configuration $\om\in\{0,1\}^E$ induces a configuration $\om_\td \in\{0,1\}^{E_\td}$
as in Section~\ref{sec:bxp}
by $\om(e) + \om_\td(e_\td) = 1$.

We recall the use of duality for bond percolation on $\LL^2$: there
is a horizontal  open primal crossing of the rectangle
$[0,n+1]\times[0,n]$ (with the usual lattice orientation)
if and only if there is no vertical open dual crossing of the dual rectangle.
When $p=\frac12$, both rectangle and probability
measure are self-dual, and thus the  chance
of a primal crossing is $\frac12$, whatever the value of
$n$.  See \cite[Lemma 11.21]{Grimmett_Percolation}.

Returning to the \rc\ measure on $G$, if $\om$ has law $\phi_{G,p,q}$, it may be shown using
Euler's formula (see \cite[Sect. 6.1]{Grimmett_RCM} or \cite[Sect. 8.5]{Grimmett_Graphs})
that $\om_\td$ has law $\phi^1_{G_\td,p_\td,q}$ where
$$
\frac p{1-p} \cdot\frac {p_\td}{1-p_\td} = q.
$$
Note that $p=p_\td$ if and only if $p=\psd$.
One must be careful with boundary conditions.
If the primal measure has the free boundary condition, then the dual
measure has the wired boundary condition (in effect, since $G_\td$ possesses a
vertex in the infinite face of $G$).

Overlooking temporarily the issue of boundary condition,
the dual graph of a rectangle $[0,n)^2$ in the square lattice is a
rectangle in a shifted square lattice, and this leads to
the aspiration to find a self-dual measure and a crossing event with probability
bounded away from $0$ uniformly in $n$. The natural measure is
$\phi_{\psd,m}^\tp$, and the natural event is $\Ch\bigl([0,n)^2\bigr)$.
Since this measure is defined on a torus, and tori are not planar, Euler's formula cannot be applied
directly. By a consideration of the homotopy of the torus, one
obtains via an amended Euler formula that there exists $c_1=c_1(q)>0$ such that
\begin{equation}\label{bf2}
\phi_{\psd,m}^\tp\left[\Ch\bigl([0,n)^2\bigr)\right] \ge c_1, \qq 0 <n < m.
\end{equation}

\begin{figure}[t]
\centering
    \includegraphics[width=0.8\textwidth]{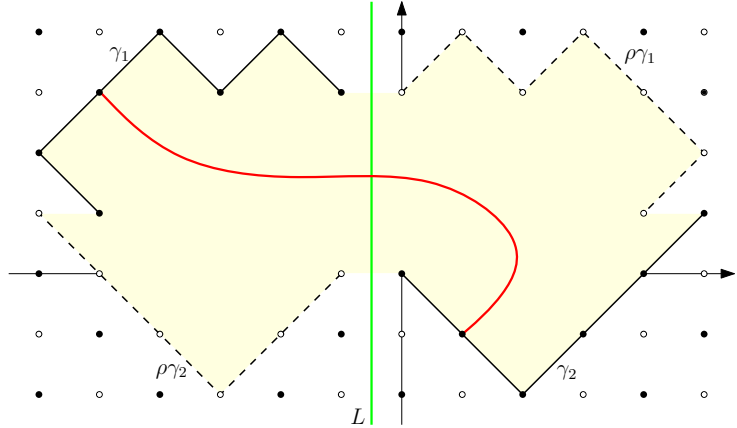}
  \caption{Under reflection $\rho$ in the green line $L$, the primal lattice is mapped to the dual.
The primal path $\gamma_1$ is on the left side with an endpoint abutting $L$, and similarly $\gamma_2$
is on the right.  Also,  $\gamma_1$ and $\rho\gamma_2$ are non-intersecting with adjacent endpoints as marked.}
  \label{fig:rsw-ref}
\end{figure}

We show next an inequality similar to \eqref{bf2} but for more general domains.
Let $\gamma_1$, $\gamma_2$ be paths as described in the caption
of Figure~\ref{fig:rsw-ref}, and consider the \rc\ measure,
denoted $\phi_{\gamma_1,\gamma_2}$, on the primal graph within the coloured
region $G(\gamma_1,\gamma_2)$ of the figure, with mixed wired/free boundary conditions
obtained by identifying all points on $\gamma_1$, and similarly on $\gamma_2$
(these two sets are not wired together as one).  For readers who prefer words to pictures:
$\g_1$ (\resp, $\g_2$) is a path on the left (\resp, right) of the line
$L$ of the figure, with
exactly one endpoint adjacent to $L$;
reflection in $L$ is denoted $\rho$; $\g_1$ and $\rho\g_2$ (and hence $\g_2$ and $\rho\g_1$ also)
do not intersect, and their other endpoints are adjacent in the sense of the figure.

Writing $\{\gamma_1 \lra \gamma_2\}$ for the event that there is an
open path in $G(\gamma_1, \gamma_2)$
from $\gamma_1$ to $\gamma_2$, we have by duality that
\begin{equation}\label{bf3}
\phi_{\gamma_1,\gamma_2}(\gamma_1 \lra \gamma_2) + \phi^*_{\gamma_1,\gamma_2}(\rho\gamma_1
\lra^* \rho\gamma_2) = 1,
\end{equation}
where $\phi^*_{\gamma_1,\gamma_2}$ is the \rc\ measure
on the dual of the graph within $G(\g_1,\g_2)$
and $\lra^*$ denotes the existence of an open dual connection.
Now, $\phi^*_{\gamma_1,\gamma_2}$
has a mixed boundary condition in which all vertices of $\rho\gamma_1 \cup \rho\gamma_2$
are identified. Since the number of clusters with this wired boundary condition differs
by at most 1 from that in which $\rho\gamma_1$ and $\rho\gamma_2$ are \emph{separately} wired,
the Radon--Nikod\'ym derivative of $\phi^*_{\gamma_1,\gamma_2}$
with respect to $\rho \phi_{\gamma_1,\gamma_2}$ takes values in the interval $[q^{-1},q]$.
Therefore,
$$
\phi^*_{\gamma_1,\gamma_2}(\rho\gamma_1
\lra^* \rho\gamma_2) \le {q^2}\phi_{\gamma_1,\gamma_2}(\gamma_1
\lra \gamma_2).
$$
By \eqref{bf3},
\begin{equation}\label{bf4}
\phi_{\gamma_1,\gamma_2}(\gamma_1 \lra \gamma_2) \ge \frac 1{1+q^2}.
\end{equation}

\smallskip\noindent
\emph{Step 2, crossing rectangles}.
We show next how \eqref{bf4} may be used to prove Proposition
\ref{thm:rsw-rc}.
Let $S=S_1 \cup S_2$, with $S_1=[0,n)\times[0,n)$ and $S_2 = [\frac12 n,\frac32 n)\times[0,n)$, as illustrated in
Figure~\ref{fig:rsw-rcm}. Let $A$ be the (increasing) event that: $S_1 \cup S_2$ contains some open cluster $C$ that
contains both a horizontal crossing of $S_1$ and a vertical crossing of $S_2$. We claim that
\begin{equation}\label{bf5}
\phi^\tp_{\psd,m}(A) \ge \frac{c_1^2}{2(1+q^2)},\qq m \ge \tfrac32 n,
\end{equation}
with $c_1$ as in \eqref{bf2}.
The proposition follows from \eqref{bf5}
since, by positive association and \eqref{bf2},
\begin{align*}
\phi_{\psd,m}^\tp\left[\Ch\bigl([0,\tfrac32 n)\times[0, n)\bigr)\right]
&\ge \phi^\tp_{\psd,m}\left[A \cap \Ch(S_2)\right]\\
&\ge \phi^\tp_{\psd,m}(A) \phi^\tp_{\psd,m}\left[\Ch(S_2)\right] \\
&\ge \frac {c_1^3}{2(1+q^2)}.
\end{align*}
We prove \eqref{bf5} next.

Let $\ell$ be the line-segment $[\frac12 n,n)\times\{0\}$,
and let $\Cv^\ell(S_2)$ be the event of a vertical crossing of
$S_2$ whose only endpoint on the $x$-axis lies in $\ell$. By a
symmetry of the \rc\ model, and \eqref{bf2},
\begin{equation}\label{g14}
\phi_{\psd,m}^\tp[\Cv^\ell(S_2)] \ge \tfrac12 \phi^\tp_{\psd,q}[\Cv(S_2)] \ge \tfrac12 c_1.
\end{equation}
On the event $\Ch(S_1)$ (\resp,
$\Cv^\ell(S_2)$) let $\Gamma_1$ (\resp, $\Gamma_2$) be the
highest (\resp, rightmost) crossing of the required type.
The paths $\Gamma_i$ may be used to construct the coloured region of Figure~\ref{fig:rsw-rcm}:
$L$ is a line in whose reflection the primal and dual lattices are interchanged; the reflections $\rho\Gamma_i$
of the $\Gamma_i$ frame a region bounded by subsets $\gamma_i$ of $\Gamma_i$ and
their reflections $\rho\gamma_i$. The situation is generally more complicated than the illustration
in the figure since the $\Gamma_i$ can wander around $S$  (see \cite{Beffara_Duminil}),
but the essential ingredients of the proof are clearest thus.

\begin{figure}[t]
\centering
    \includegraphics[width=0.8\textwidth]{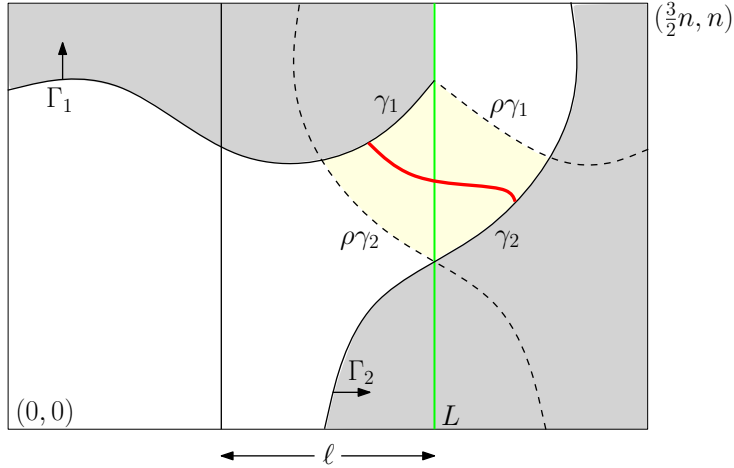}
  \caption{The path $\Gamma_1$ is the highest crossing of $S_1$, and $\Gamma_2$ is the rightmost
crossing of $S_2$ starting in $\ell$.
The coloured region is framed by the $\Gamma_i$ and their reflections in $L$.
This is the simplest situation and there are more complex, depending on the $\Gamma_i$.}
  \label{fig:rsw-rcm}
\end{figure}

Let $I = \{\Gamma_1 \cap \Gamma_2 \ne\es\}$, so that
\begin{align}
\phi^\tp_{\psd,m}(A) &\ge \phi^\tp_{\psd,m}\bigl[\Ch(S_1) \cap \Cv^\ell(S_2) \cap I\bigr]\nonumber\\
&\hskip2cm + \phi^\tp_{\psd,m}\bigl[A \cap \Ch(S_1) \cap \Cv^\ell(S_2) \cap \compl{I}\bigr].
\label{bf6}
\end{align}
On the event $\Ch(S_1) \cap \Cv^\ell(S_2) \cap \compl{I}$,
$$
\phi^\tp_{\psd,m}(A \mid \Gamma_1,\Gamma_2)
\ge \phi^\tp_{\psd,m}\bigl(\gamma_1 \lra \gamma_2 \text{ in } G(\gamma_1,\gamma_2)
\bigmid \Gamma_1 ,\Gamma_2\bigr).
$$
Since $\{\gamma \lra \gamma_2\text{ in } G(\gamma_1,\gamma_2)\}$ is an increasing event,
the right side is no larger if we augment
the conditioning with the event that all edges of $S_1$ strictly above $\Gamma_1$ (and $S_2$ to
the right of $\Gamma_2$) are closed.  It may then be seen that
\begin{equation}\label{G201}
\phi^\tp_{\psd,m}\bigl(\gamma_1 \lra \gamma_2 \text{ in } G(\gamma_1,\gamma_2)
\bigmid \Gamma_1 ,\Gamma_2\bigr)
\ge \phi_{\gamma_1,\gamma_2}(\gamma_1 \lra \gamma_2).
\end{equation}
This follows from \eqref{G11-} by conditioning on the configuration off
$G(\g_1,\g_2)$. By \eqref{g14}--\eqref{bf6}, \eqref{bf4}, and positive association,
\begin{align*}
\phi^\tp_{\psd,m}(A) &\ge \frac1{1+q^2} \phi^\tp_{\psd,m}\bigl(\Ch(S_1) \cap \Cv^\ell(S_2)\bigr)\\
&\ge
\frac{c_1^2}{2(1+q^2)},
\end{align*}
and \eqref{bf5} follows.

We repeat the need for care in deducing \eqref{G201} in general, since
the picture can be more complicated than indicated in Figure~\ref{fig:rsw-rcm}.

\subsection{Proof of the critical value}\label{sec:rcm-pf}

The inequality $\pc\ge \psd$  follows from the stronger statement $\theta^\tf(\psd)=0$,
and has been known since \cite{G93, Welsh93}. Here is a brief explanation.
We have that
$\phi^\tf_{p,q}$ is ergodic and has the so-called finite-energy property.
By the Burton--Keane uniqueness
theorem \cite{BK89}, the number of infinite open clusters
is either a.s.\ 0 or a.s.\ 1. If $\theta^\tf(\psd)>0$, then a contradiction
follows by duality, as in the case of percolation. Hence, $\theta^\tf(\psd)=0$,
and therefore $\pc\ge \psd$. The details may be found in \cite[Sect. 6.2]{Grimmett_RCM}.

It suffices then to show that $\theta^\tw(p)>0$ for $p > \psd$, since this implies $\pc\le \psd$.
The argument of \cite{Beffara_Duminil} follows the classic route
for percolation, but with two significant twists.
The first of these is the use of a sharp-threshold theorem for the \rc\ measure,
combined with the uniform estimate of
Proposition~\ref{thm:rsw-rc}, to show that, when $p>\psd$, the chances of box-crossings are near
to~1.\looseness=-1

\begin{prop}\label{thm:rsw-rc+}
Let  $p>\psd$. For  integral $\beta > \alpha\ge 2$,
there exist $a, b>0$ such that
$$
\phi^\tp_{p,\beta n}\left[\Ch\bigl([0,\alpha n] \times [0,n)\bigr)\right] \ge 1-a n^{-b},
\qquad n \ge 1.
$$
\end{prop}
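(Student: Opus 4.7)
The plan is to upgrade the uniform lower bound of Proposition~\ref{thm:rsw-rc} to a $1-o(1)$ upper bound on the complementary event by means of a sharp-threshold argument. The natural framework is the Graham--Grimmett sharp-threshold inequality for the random-cluster model (an FK-analogue of the Friedgut--Kalai/Talagrand symmetric threshold theorem), which says that for a monotone event $A$ that is invariant under a transitive group $\Gamma$ of symmetries of the underlying graph,
\begin{equation*}
\frac{d}{dp}\phi_{p,q}^{\tp}(A) \;\ge\; \frac{c_q}{p(1-p)}\,\phi_{p,q}^{\tp}(A)\bigl(1-\phi_{p,q}^{\tp}(A)\bigr)\,\log\bigl(1/\Delta(A)\bigr),
\end{equation*}
where $\Delta(A)=\max_e \phi_{p,q}^{\tp}(e\text{ pivotal for }A)$ and $c_q>0$ depends only on $q$. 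The periodic boundary condition is essential here since it is what makes the torus vertex-transitive and supplies the symmetry group needed to kill the Kahn--Kalai--Linial log factor.

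First I would work on the torus $B_{\beta n}$ of side $2\beta n$ and consider the event $A_n$ that there exists a horizontal open crossing of a translate of $[0,\alpha n]\times[0,n)$ lying in the torus (a symmetrized version of the rectangle-crossing event); invariance under the group of lattice translations of the torus is then automatic. By Proposition~\ref{thm:rsw-rc} and a standard FKG ``square-root trick'' over a bounded number of translates, $\phi_{\psd,\beta n}^{\tp}(A_n)$ is bounded away from~$0$ uniformly in $n$; applying the same at the dual point shows its complement is likewise bounded away from $0$ up to $p=\psd$. The key quantitative input is that $\Delta(A_n)\to 0$ polynomially in $n$: every edge is pivotal for $A_n$ only if it lies in a four-arm configuration of scale $\Omega(n)$, and such events have probability at most $n^{-\delta}$ uniformly in the edge, by the \bxp\ at $\psd$ combined with a standard BK-type or arm-separation argument. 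Hence $\log(1/\Delta(A_n))\ge \delta\log n$.

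Next I would feed this into the sharp-threshold inequality and integrate from $\psd$ to $p$: writing $g_n(u)=\phi_{u,\beta n}^{\tp}(A_n)$, one obtains
\begin{equation*}
\log\frac{g_n(p)}{1-g_n(p)} - \log\frac{g_n(\psd)}{1-g_n(\psd)} \;\ge\; c_q\delta(p-\psd)\log n,
\end{equation*}
so that $1-g_n(p)\le C\,n^{-b}$ with $b=c_q\delta(p-\psd)>0$. Finally, a crossing of a single rectangle $[0,\alpha n]\times[0,n)$ is implied by $A_n$ (up to a uniformly bounded number of translates), and an application of the FKG inequality together with stochastic domination \eqref{G11-} transfers the estimate back from the torus to the fixed rectangle, yielding the claimed bound.

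The main obstacle I anticipate is the pivotal estimate: producing a genuinely uniform polynomial bound on $\Delta(A_n)$ requires the \bxp\ at $p=\psd$ in the strong form \eqref{bf9} (rather than only the finite-volume torus form of Proposition~\ref{thm:rsw-rc}), together with an RSW-style gluing argument to turn the crossings into a true four-arm bound. A secondary technical point is verifying that the chosen event $A_n$ is both invariant under the translation group of the torus \emph{and} increasing, and that its probability is bounded away from $0$ and $1$ at $p=\psd$; handling the boundary conditions carefully when transferring between torus and rectangle is where most of the bookkeeping will go.
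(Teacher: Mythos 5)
Your skeleton is the same as the paper's (and as Beffara--Duminil-Copin's): symmetrize on the torus, apply the Graham--Grimmett sharp-threshold theorem for the periodic \rc\ measure, take Proposition~\ref{thm:rsw-rc} as the starting estimate at $\psd$, and integrate the differential inequality up to $p$. However, the step you make the ``key quantitative input'' --- a uniform polynomial bound $\Delta(A_n)\le n^{-\delta}$ on the maximal pivotality probability, obtained from the \bxp\ at $\psd$ ``combined with a standard BK-type or arm-separation argument'' --- is a genuine gap. BK-type inequalities are not available for the \rc\ measure with $q>1$, and arm-probability estimates at the self-dual point do not follow from Proposition~\ref{thm:rsw-rc}: that proposition gives crossings only under the periodic boundary condition, whereas arm/pivotality bounds require crossing estimates that are robust under the boundary conditions created by conditioning, which is exactly what is missing here (the paper notes that the free-boundary analogue of \eqref{bf9} is false for large $q$). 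Moreover the differential inequality must be integrated over all $u\in(\psd,p)$, so such a bound would be needed uniformly in $u$, not only at $\psd$. The point of the symmetrization is precisely to make this step unnecessary: for an increasing event invariant under the transitive group of translations of the torus, all (conditional) influences are equal, and the Graham--Grimmett theorem then yields $\frac{d}{du}\phi^\tp_{u}(A)\ge c\,\phi^\tp_u(A)\bigl(1-\phi^\tp_u(A)\bigr)\log N$ with $N\asymp n^2$ the number of edges; no bound on pivotal probabilities enters at all. Your own statement of the inequality conflates the two mechanisms: if you use the $\log(1/\Delta)$ form you do not need the symmetry, and if you exploit the symmetry you do not need $\Delta$. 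This is how the paper (following \cite{BR06,GG_inf,Beffara_Duminil}) kills the logarithmic factor, and it is the correct repair of your argument.

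A secondary point is the transfer from the symmetrized event back to a fixed rectangle, which needs more than ``FKG together with stochastic domination''. The standard device is to observe that a horizontal crossing of \emph{any} translate of $[0,\alpha n]\times[0,n)$ on the torus forces a horizontal crossing of one of a bounded number $K$ of rectangles from a fixed grid (slightly shorter horizontally and taller vertically, all congruent). Their complements are decreasing, so positive association and translation-invariance of $\phi^\tp$ give $\phi^\tp\bigl(\text{fixed rectangle not crossed}\bigr)^K\le\phi^\tp(A_n^{\,\tc})\le a n^{-b}$, whence the fixed-rectangle failure probability is at most $a^{1/K}n^{-b/K}$, still polynomially small; a final adjustment of $\alpha$ and $n$ recovers the stated event. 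With the influence step replaced as above and this localization made explicit, the rest of your outline is sound and matches the paper's intended proof.
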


\begin{proof}[Outline proof]
Recall first  the remark after Proposition~\ref{thm:rsw-rc}
that $\phi^\tp_{\psd,m}$ has the $\a$-\bxp.
Now some history. In proving that $\pc=\frac12$ for bond percolation on $\LL^2$, Kesten used
a geometrical argument to derive a sharp-threshold statement for box-crossings
along the following lines: since crossings of rectangles with given aspect-ratio
have probabilities bounded away from 0 when $p=\frac12$,
they have probability close to 1 when $p>\frac12$.  Kahn, Kalai, and Linial (KKL) \cite{KKL}
derived a general approach to sharp-thresholds of probabilities $\PP_{\frac12}(A)$
of increasing events $A$ under the product measure $\PP_{\frac12}$,
and this was extended later to more general product measures (see \cite[Chap. 4]{Grimmett_Graphs}
and \cite{KalS} for general accounts). Bollob\'as and Riordan
\cite{BR06} observed that the KKL method could be used instead of Kesten's geometrical method.
The KKL method works best for events and measures with a certain symmetry, and it is explained
in \cite{BR06} how this may be adapted for percolation box-crossings.

The KKL theorem was extended in \cite{GG_inf} to measures satisfying the FKG
lattice condition (such as, for example, \rc\ measures). The
symmetrization argument of \cite{BR06}
may be adapted to the \rc\ model with periodic boundary
condition (since the measure is translation-invariant), and the current proposition is a consequence.

See \cite{Beffara_Duminil} for the details,
and \cite[Sect. 4.5]{Grimmett_Graphs} for an account of the KKL method, with proofs
and references.
\end{proof}

\begin{figure}[t]
\centering
    \includegraphics[width=0.5\textwidth]{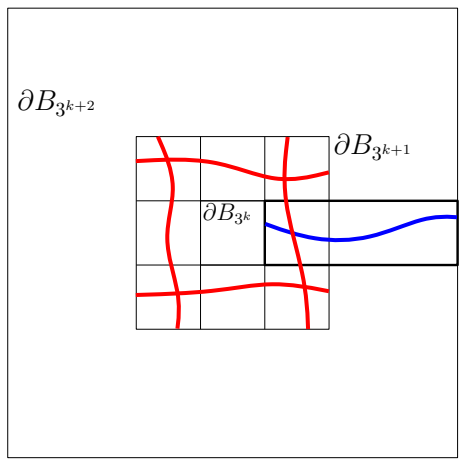}
  \caption{If the four red box-crossings exist, as well as the blue box-crossing,
then the event $A_k$ occurs.}
  \label{fig:boxes}
\end{figure}

Let $p > \psd$, and consider the annulus
$\sA_k = B_{3^{k+1}}  \setminus B_{3^k}$.
Let $A_k$ be the event that $\sA_k$ contains an open cycle $C$ with $0$ in its inside,
and in addition there is an open path from $C$ to the  boundary
$\pd B_{3^{k+2}}$.
We claim that there exist $c,d>0$, independent of $k$, such that
\begin{equation}\label{bf7}
\phi^1_{p,3^{k+2}}(A_k) \ge 1- ce^{-dk}, \qquad k \ge 1.
\end{equation}
This is proved as follows. The event $A_k$ occurs whenever the two rectangles
$$
[-3^{k+1},-3^k) \times [-3^{k+1},3^{k+1}], \quad (3^k,3^{k+1}] \times [-3^{k+1},3^{k+1}]
$$
are crossed vertically, and in addition the three rectangles
\begin{gather*}
[-3^{k+1},3^{k+1}] \times [-3^{k+1},-3^{k}), \quad [-3^{k+1},3^{k+1}] \times (3^{k},3^{k+1}],\\
[3^k,3^{k+2}] \times [-3^k,3^k]
\end{gather*}
are crossed horizontally. See Figure~\ref{fig:boxes}.
Each of these five rectangles has shorter dimension at least
$3^k$\vadjust{\eject}
and longer dimension not exceeding $3^{k+2}$.
By Proposition~\ref{thm:rsw-rc+} and the invariance of
$\phi^\tp_{p,3^{k+2}}$ under rotations and translations, each of these five events has
$\phi^\tp_{p,3^{k+2}}$-probability
at least $1-a3^{-bk}$ for suitable $a,b>0$. By stochastic
ordering \eqref{G11-} and positive association,
$$
\phi^1_{p,3^{k+2}}(A_k) \ge \phi^\tp_{p,3^{k+2}}(A_k)
\ge (1-a3^{-bk})^5,
$$
and \eqref{bf7} is proved.

Recall the weak limit $\fpq^1=\lim_{k\to\oo} \phi^\tw_{p,3^k}$. The
events $A_k$ have been defined in such a way that,
on the event $I_K = \bigcap_{k=K}^\oo A_k$, there exists an
infinite open cluster.
It suffices then to show that $\fpq^1(I_K)>0$ for large $K$.
Now,
\begin{equation}\label{bf8}
\fpq^1\left(\bigcap_{k=K}^m A_k\right) =
\fpq^1(A_m) \prod_{k=K}^{m-1}
\fpq^1\left(A_k \,\left|\, \bigcap_{l=k+1}^m A_l \right.\right).
\end{equation}

Let $\Ga_{k}$ be the outermost open cycle in $\sA_{k}$, whenever
this exists. The conditioning on the right side of
\eqref{bf8} amounts to the existence of $\Ga_{k+1}$ together with the
event $\{\Ga_{k+1}\lra \pd B_{3^{k+2}}\}$, in addition to some further information,
$I$ say, about the configuration  outside $\Ga_{k+1}$. For any appropriate cycle $\g$, the event
$\{\Gamma_{k+1}=\g\}\cap\{\g \lra \pd B_{3^{k+2}}\}$
is defined in terms of the states of edges of $\g$ and outside $\g$.
On this event,
$A_k$ occurs if and only if $A_k(\g):= \{\Ga_k \text{ exists}\}\cap\{\Ga_k \lra \g\}$ occurs.
The latter event is measurable on the states of edges inside $\g$, and the appropriate
conditional \rc\ measure is that with wired boundary condition inherited from $\g$,
denoted $\phi^1_\g$. (We
have used the fact that the cluster-count inside $\g$ is not changed by $I$.)
Therefore, the term in the product
on the right side of \eqref{bf8} equals the average over $\g$ of
$$
\fpq^1\left(A_k(\g) \,\Big|\, \{\Ga_{k+1}=\g\}\cap \{\g\lra\pd B_{3^{k+2}}\}\cap I\right) =
\phi^1_{\g}(A_k(\g)).
$$
Let $\De = \pd B_{3^{k+2}}$.
Since $\phi_{\De}^1 \lest \phi^1_{\g}$ and $A_k(\De) \subseteq A_k(\g)$,
$$
\phi^1_\g(A_k(\g)) \ge \phi^1_\De(A_k(\De)) = \phi^1_{p,3^{k+2}}(A_k).
$$
In conclusion,
\begin{equation}\label{g13}
\fpq^1\left(A_k \,\left|\, \bigcap_{l=k+1}^m A_l \right.\right)
\ge \phi^1_{p,3^{k+2}}(A_k).
\end{equation}

By \eqref{bf7}--\eqref{g13},
$$
\fpq^1\left(\bigcap_{k=K}^m A_k\right) \ge
\fpq^1(A_m) \prod_{k=K}^{m-1} (1-ce^{-dk}).
$$
By \eqref{bf9}, the \bxp, and positive association
(as in the red paths of Figure~\ref{fig:boxes}), there exists $c_2>0$ such
that $\fpq^1(A_m)\ge c_2$ for all $m \ge 1$. Hence,
$$
\fpq^1(I_K) = \lim_{m\to\oo} \fpq^1\left(\bigcap_{k=K}^m A_k\right)
\ge c_2\prod_{k=K}^\oo (1-ce^{-dk}),
$$
which is strictly positive for large $K$. Therefore $\th^1(p,q)>0$,
and the theorem is proved.

\section*{Acknowledgements}
The author is grateful to Ioan Manolescu for many discussions concerning the material in
Section~\ref{sec:bp}, and for his detailed comments
on a draft of these notes. He thanks the organizers and \lq students' at the 2011
Cornell Probability Summer
School and the 2011 Reykjavik Summer School in Random Geometry.
This work was supported in part by the EPSRC under grant EP/103372X/1.

\bibliographystyle{amsplain}

\providecommand{\bysame}{\leavevmode\hbox to3em{\hrulefill}\thinspace}
\providecommand{\MR}{\relax\ifhmode\unskip\space\fi MR }
\providecommand{\MRhref}[2]{%
  \href{http://www.ams.org/mathscinet-getitem?mr=#1}{#2}
}
\providecommand{\href}[2]{#2}

\end{document}